\documentclass[a4paper,11pt]{article} 

\usepackage{amsthm}                   
\usepackage{amsmath}                  
\usepackage{amsfonts}                 
\usepackage{amssymb}                  
\usepackage{mathrsfs}                 
\usepackage{subfigure}                
\usepackage{bbm}                      
\usepackage[margin=0.75in]{geometry}
\usepackage{multicol}                 
\usepackage{setspace}									
\usepackage{enumerate}								
\usepackage{mathtools}								
\usepackage{stmaryrd}									
\usepackage{fancyhdr}									
\usepackage{lastpage}									
\usepackage{soul}                       

\usepackage{xcolor}
\usepackage{graphicx}

\usepackage{soul} 

\numberwithin{equation}{section}

\newtheoremstyle{mythm}{6pt}{6pt}{\itshape}{}{\bfseries}{:}{ }{} 
\newtheoremstyle{mydef}{6pt}{6pt}{}{}{\bfseries}{:}{ }{}         
\newtheoremstyle{myrem}{6pt}{6pt}{}{}{\scshape}{:}{ }{}          

\theoremstyle{mythm}
\newtheorem{thm}{Theorem}
\newtheorem{prop}[thm]{Proposition}
\newtheorem{lemma}[thm]{Lemma}

\numberwithin{thm}{section}

\theoremstyle{myrem}
\newtheorem{rem}[thm]{Remark}
\newtheorem{ex}[thm]{Example}

\theoremstyle{mydef}
\newtheorem{defi}[thm]{Definition}
\newtheorem{assu}{Assumption}

\renewenvironment{proof}{\par\medbreak\noindent\textit{Proof}:\hskip.5em\ignorespaces}{\hfill\qedsymbol\medbreak} 
\renewcommand{\qedsymbol}{\hfill\rule{2mm}{2mm}}                                                                  

\newcommand{\Hs}{\mathcal{H}}
\newcommand{\C}{\ensuremath{\mathbbm{C}}}     
\newcommand{\N}{\ensuremath{\mathbbm{N}}}     

\DeclareMathOperator{\ran}{ran}																														  
\DeclareMathOperator{\Ker}{ker}																														  

\DeclareMathOperator{\spann}{span}

\newcommand{\und}[1]{_{_{#1}}}
\newcommand{\psinv}{M^\#\und{W,V}}
\newcommand{\mult}{M_{m,\Phi,\Psi}}

\makeatletter
\def\imod#1{\allowbreak\mkern10mu({\operator@font mod}\,\,#1)} 
\makeatother

\makeatletter
\newcommand*{\itemequation}[3][]{%
  \item
  \begingroup
    \refstepcounter{equation}%
    \ifx\\#1\\%
    \else
      \label{#1}%
    \fi
    \sbox0{#2}
    \sbox2{$\displaystyle#3\m@th$}
    \sbox4{\@eqnnum}

    \noindent
    \copy0\hspace{0.5em}
    \copy2\hfill\copy4
    \par
  \endgroup
  \ignorespaces
}
\makeatother  

\everymath{\displaystyle}

\begin{document}

    \title{Oblique Pseudoinverses of Multipliers}
    \author{Jerielle V. Malonzo and Diana T. Stoeva}
    \date{\today}

    \maketitle

    
    \textbf{ABSTRACT:}

    Multipliers are operators on a Hilbert space that have been implicitly used in applications throughout the years. {Multipliers are defined via three sequences and are mainly used in signal processing and psychoacoustics among many others. They modify a signal in three steps: analysis, term-wise scalar multiplication, and synthesis. While the study of multipliers stemmed from applications, its theory has earned an equally important place in the literature {in the last few decades}. One of the main questions in the theory of multipliers is the invertibility of multipliers. It is known that multipliers are not always invertible, and it has been shown that under mild conditions, the inverse of an invertible multiplier can be expressed as another multiplier of a very specific structure.} In this work, { we extend such result to the setting of possibly noninvertible multipliers. We solved the challenges of determining an appropriate generalized inverse to replace the inverse, and determining a proper duality relationship of the sequences of the resulting multiplier. We showed that these are nontrivial tasks that required the intricacies of the theory of frames for subspaces.   }
   

    \section{Introduction}

    Let $(\Hs_1,\langle\cdot,\cdot\rangle_{\Hs_1})$ and $(\Hs_2,\langle\cdot,\cdot\rangle_{\Hs_2})$ be Hilbert spaces. Consider  sequences $\Psi:=(\psi_n)_{n=1}^\infty $ with elements from $\Hs_1$, $\Phi:=(\phi_n)_{n=1}^\infty $  with elements from $\Hs_2$, and {a} scalar sequence $m:=(m_n)_{n=1}^\infty $ with elements from $\C$. Multipliers are operators $\mult$ of the form
    \begin{equation*}
        \mult h = \sum_{n=1}^\infty  m_n\langle h,\psi_n\rangle_{\Hs_1}\phi_n,
    \end{equation*}
    defined for those $h\in\Hs_1$ for which the series above converges in $\Hs_2$. As it can be observed, the action involves three steps: analysis using $\Psi$, term-wise multiplication by the scalar sequence $m$ (called the \emph{symbol} or the \emph{weight} of the multiplier), and synthesis using the sequence $\Phi$. 
    
    Multiplies arise naturally from applications. In fact, the so-called Gabor multipliers have been  implicitly used in applications throughout the years. Gabor multipliers serve as a main tool in signal processing and are used to implement time variant filters \cite{MatzHlawatschLinearTimeFreqFiltersOnLineAlgoandApps}. The first systematic theoretical study on such multipliers, on the other hand, is done in \cite{FeichtingerNowakAFirstSurveyofGaborMultipliers}. The theory of multipliers is rich and is being studied continuously, with Balazs pioneering the systematic study of Bessel multipliers \cite{BalazsBasicDefinitionandPropertiesofBesselMultipliers}, extending results about Gabor multipliers from \cite{FeichtingerNowakAFirstSurveyofGaborMultipliers}.
    
    Multipliers are deeply connected to another important concept -- \emph{frames}. Frames  generalize orthonormal bases and were introduced by Duffin and Schaeffer in \cite{DuffinSchaefferAclassofnonharmonicFourierSeries} in the context of nonharmonic Fourier series. They were then revived by Daubechies, Grossmann, and Meyer \cite{Daubechies_Grossmann_Meyer_1986} paving the way to today's recognition of frames in both theory and applications. We refer the readers to \cite{CasazzaTheArtofFrameTheory,  OleIntroductiontoFramesandRieszBases, HeilABasisTheoryPrimer, CasazzaKutyniokFiniteFrames} for further reading on the theory of frames. Among the many areas in which frames are applied today, we mention, for example, data compression \cite{ GoyalEtAlQuantizedFrameExpansionswithErasures}, compressed sensing \cite{FoucartRauhutAMathematicalIntroductiontoCompressiveSensing}, and sampling theory \cite{BenedettoFerreiraModernSamplingTheory}. 
    
    The reason frames are very useful in real-world applications is mainly due to their ability to provide {perfect and stable} reconstruction of a signal via what is called their \emph{dual frames}. It is known that a frame always has a dual frame. In particular, the so-called \emph{canonical dual frame} is constructed using a specific operator associated to the given frame, called the \emph{frame operator}. The \emph{frame operator} is positive, self-adjoint, and more importantly, invertible. It consists of an analysis step followed by a synthesis step, both with respect to the given frame. Hence, multipliers can be viewed as a generalization of frame operators obtained by inserting a multiplication step with a given sequence of weights. However, unlike frame operators, multipliers are not guaranteed to be invertible. 
    
    It is then natural to ask when else a multiplier is invertible. This question has been explored in several papers of Stoeva and Balazs. In \cite{StoevaBalazsInvertibilityofMultipliers, StoevaBalazsDetailedCharacterizationofConditionsfortheUnconditionalConvergenceandInvertibilityofMultipliers, StoevaBalazsASurveyontheUnconditionalConvergenceandtheInvertibilityofFrameMultiplierswithImplementation}, the authors provided some sufficient conditions for invertibility of a multiplier and gave formulas for the inverse in the case when inversion is possible. Interestingly, they showed in \cite{BalazsStoevaRepresentationofhtInverseofaFrameMultiplier} that the inverse of an invertible frame multiplier is a frame multiplier with a special structure. More precisely, {they showed the following results}.

    \begin{thm}\cite{BalazsStoevaRepresentationofhtInverseofaFrameMultiplier}\label{InvofMultasMultorig}
       Let $\Phi$ and $\Psi$ be frames for a Hilbert space $\Hs$. Suppose the multiplier $\mult$ is invertible with symbol $m$ satisfying $0<\inf_n|m_n|\le\sup_n|m_n|<\infty.$ Then the following are true. 
       \begin{enumerate}[(i)]
           \item There exists a unique dual frame $\Phi^\dagger$ of $\Phi$ such that 
           \begin{equation}\label{invofMultwithSpecificStructure1}
               \mult^{-1}=M_{1/m,\Psi^d,\Phi^\dagger}\quad \text{ for any dual frame $\Psi^d$ of $\Psi$}.
           \end{equation}
            \item There exists a unique dual frame $\Psi^\dagger$ of $\Psi$ such that 
            \begin{equation}\label{invofMultwithSpecificStructure2}
                \mult^{-1}=M_{1/m,\Psi^\dagger,\Phi^d}\quad \text{ for any dual frame $\Phi^d$ of $\Phi$}.
            \end{equation}
       \end{enumerate}
   \end{thm}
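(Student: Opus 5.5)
The plan is to construct $\Phi^\dagger$ explicitly from the inverse and then check duality, the multiplier identity, and uniqueness. Write $M:=\mult$. Since $M$ is invertible and bounded (because $\Phi,\Psi$ are Bessel and $m$ is bounded), set
\begin{equation*}
\phi_n^\dagger := \frac{1}{\overline{m_n}}\,(M^{-1})^*\psi_n ,\qquad n\in\N .
\end{equation*}
Since $(M^{-1})^*$ is bounded and invertible and $1/|m_n|\le 1/\inf_k|m_k|<\infty$, the sequence $\Phi^\dagger$ is a frame. Indeed, a bounded invertible operator maps frames to frames, and scaling by a sequence bounded above and away from zero preserves the frame property.

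\textbf{Duality.} For $h\in\Hs$ I compute
\begin{equation*}
\sum_n \langle h,\phi_n^\dagger\rangle\phi_n=\sum_n \frac{1}{m_n}\langle M^{-1}h,\psi_n\rangle\phi_n .
\end{equation*}
This does not directly give $h$. So I test the other synthesis order instead:
\begin{equation*}
\sum_n \langle h,\phi_n\rangle \phi_n^\dagger=(M^{-1})^*\sum_n \overline{m_n}^{-1}\langle h,\phi_n\rangle\psi_n .
\end{equation*}
This also does not close directly. The correct choice is therefore to use the identity $MM^{-1}=I$ in the form
\begin{equation*}
h=\sum_n m_n\langle M^{-1}h,\psi_n\rangle\phi_n=\sum_n \langle h, \overline{m_n}(M^{-1})^*\psi_n\rangle\phi_n .
\end{equation*}
This shows that $\widetilde\phi_n:=\overline{m_n}(M^{-1})^*\psi_n$ is a dual of $\Phi$. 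So I would redefine $\phi^\dagger_n$ to be a rescaled version built to match the symbol $1/m$. Concretely, I want
\begin{equation*}
M_{1/m,\Psi^d,\Phi^\dagger}h=\sum_n \tfrac{1}{m_n}\langle h,\psi^d_n\rangle\phi^\dagger_n=M^{-1}h .
\end{equation*}
Applying $M$ to both sides, this is equivalent to requiring $\tfrac1{m_n}M\phi_n^\dagger$ to reconstruct against $\psi_n^d$. The natural choice is therefore $\phi^\dagger_n:=m_nM^{-1}\phi_n$. Then
\begin{equation*}
\sum_n \tfrac1{m_n}\langle h,\psi^d_n\rangle m_nM^{-1}\phi_n=M^{-1}\sum_n\langle h,\psi_n^d\rangle\phi_n .
\end{equation*}
For this to equal $M^{-1}h$, I need $\sum_n\langle h,\psi^d_n\rangle\phi_n = h$, which fails in general. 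So instead I use $M$'s own structure. I define $\phi_n^\dagger := m_n M^{-1}\phi_n$ and check that it is dual to $\Psi$, not to $\Phi$:
\begin{equation*}
\sum_n\langle h,\psi_n\rangle\phi_n^\dagger=M^{-1}Mh=h .
\end{equation*}
Then for any dual $\Psi^d$ of $\Psi$ I get
\begin{equation*}
M_{1/m,\Psi^d,\Phi^\dagger}h=\sum_n\langle h,\psi_n^d\rangle M^{-1}\phi_n=M^{-1}\Big(\sum_n\langle h,\psi_n^d\rangle\phi_n\Big).
\end{equation*}
This again needs $\Psi^d,\Phi$ to reconstruct. Reordering instead to $M^{-1}=M_{1/m,\Psi^d,\Phi^\dagger}$ with $\Phi^\dagger$ dual to $\Phi$ forces me to use the adjoint relations. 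Here I expect the statement intends the frames to reconstruct via the pair of duals, with $\Phi^\dagger$ dual of $\Phi$, so that the cross-reconstruction becomes $\sum\langle h,\psi_n^d\rangle\psi_n=h$.

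\textbf{Main step and obstacle.} The key computation is therefore to find $\Phi^\dagger$ with $\Phi^\dagger$ dual to $\Phi$ and $\tfrac1{m_n}\phi_n^\dagger=M^{-1}(\ldots)$ such that the formula holds for every dual $\Psi^d$. The decisive trick, which I expect to be the main obstacle, is to force the symbol to appear so that $\sum_n\langle h,\psi_n^d\rangle\psi_n=h$ is what gets used. I would take
\begin{equation*}
\phi_n^\dagger := m_n M^{-1}\!\!\;^*\text{-type expression chosen so that } \tfrac1{m_n}\phi^\dagger_n=(M^{-1})^{*}\text{-adjusted }\psi_n ,
\end{equation*}
then verify the identity by applying $M$ and using that $\Psi^d$ is dual to $\Psi$.

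\textbf{Uniqueness.} If two duals $\Phi^\dagger,\Phi^\ddagger$ both satisfy the identity for a fixed dual $\Psi^d$, then
\begin{equation*}
\sum_n \tfrac1{m_n}\langle h,\psi^d_n\rangle(\phi_n^\dagger-\phi_n^\ddagger)=0 \quad\text{for all } h .
\end{equation*}
Varying over all duals $\Psi^d=\widetilde\Psi+$ (anything synthesizing to zero against $\Psi$) shows that $\phi_n^\dagger-\phi^\ddagger_n$ must vanish termwise. This uses that the set of duals is rich enough, in particular that one can perturb a single coefficient. Part (ii) follows by applying (i) to the adjoint $M^*=M_{\overline m,\Phi,\Psi}$ and taking adjoints.
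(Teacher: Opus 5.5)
There is a genuine gap: the existence half of (i) is never proved. You cycle through three candidates for $\Phi^\dagger$ and end with a placeholder (``$m_nM^{-1*}$-type expression chosen so that\dots''). So no sequence is ever fixed, and the identity $\mult^{-1}=M_{1/m,\Psi^d,\Phi^\dagger}$ is never verified.

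The root cause is a misreading of the notation. In this paper $M_{m,\Phi,\Psi}h=\sum_n m_n\langle h,\psi_n\rangle\phi_n$, so the second index is the synthesis sequence and the third the analysis sequence. Hence $M_{1/m,\Psi^d,\Phi^\dagger}h=\sum_n \frac{1}{m_n}\langle h,\phi^\dagger_n\rangle\psi^d_n$. You wrote $\sum_n\frac{1}{m_n}\langle h,\psi^d_n\rangle\phi^\dagger_n$ instead, although you used the correct convention when expanding $\mult\mult^{-1}=I$. Under your reversed reading the claim is false in general. Requiring the formula for every dual $\Psi^d$ forces $\phi^\dagger_n=m_n\mult^{-1}\psi_n$, and this is a dual of $\Phi$ only if $M_{m,\Psi,\Phi}=\mult$. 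That is why each of your attempts stalled.

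With the correct reading, everything you need is already in your draft. Take the sequence you found and then discarded, $\phi^\dagger_n:=\overline{m_n}(\mult^{-1})^*\psi_n$, which is exactly the paper's $\Phi^\dagger$. It is a dual of $\Phi$ by your own computation from $\mult\mult^{-1}=I$, and for every dual $\Psi^d$ of $\Psi$
\[
M_{1/m,\Psi^d,\Phi^\dagger}h=\sum_n \tfrac{1}{m_n}\langle h,\overline{m_n}(\mult^{-1})^*\psi_n\rangle\psi^d_n=\sum_n\langle \mult^{-1}h,\psi_n\rangle\psi^d_n=\mult^{-1}h .
\]
Likewise, your $\psi^\dagger_n:=m_n\mult^{-1}\phi_n$, which you correctly showed is a dual of $\Psi$, is the paper's $\Psi^\dagger$. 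It gives (ii) at once: $M_{1/m,\Psi^\dagger,\Phi^d}h=\mult^{-1}\sum_n\langle h,\phi^d_n\rangle\phi_n=\mult^{-1}h$. Your adjoint route also works, but note that $\mult^*=M_{\overline m,\Psi,\Phi}$, not $M_{\overline m,\Phi,\Psi}$.

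For uniqueness, the mechanism is not perturbing a single element of a dual. The duals of $\Psi$ are exactly $\widetilde\Psi+\Gamma$ with $\widetilde\Psi$ the canonical dual and $T_\Gamma T_\Psi^*=0$. Changing only $\psi^d_k$ by $g$ alters $T_{\Psi^d}T_\Psi^*$ by $\langle\cdot,\psi_k\rangle g$, so this is not possible in general. What you need instead is that $\bigcup_{\Psi^d}\ran(T^*_{\Psi^d})$ is dense in $\ell^2$ (Balazs--Stoeva; Proposition~\ref{framecharacterizedbyitssetofODF}(i) is the oblique analogue). Suppose $\Phi^\ddagger$ also works, and for each $h$ set $c=\mathcal M_{1/m}T^*_{\Phi^\dagger}h-\mathcal M_{1/m}T^*_{\Phi^\ddagger}h\in\ell^2$ (this uses $\inf_n|m_n|>0$). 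Then $T_{\Psi^d}c=0$ for all duals, so $c\perp\ran(T^*_{\Psi^d})$ for all of them. Density gives $c=0$, and hence $\Phi^\dagger=\Phi^\ddagger$.
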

    Follow-up discussions in \cite{StoevaBalazsOntheDualFrameInducedbyanInvertibleFrameMultiplier} provide more discoveries on the sequences $\Phi^\dagger$ and $\Psi^\dagger$ used in Theorem \ref{InvofMultasMultorig}.


    \subsection{Motivation, goals and challenges} \label{GoalsandPlans}

    {In applications, multipliers are often noninvertible. Thus, it is of great interest, both in theory and applications, to investigate how one deals with noninvertible multipliers.}
    {\color{red} } Particularly, our main goal in this paper is to extend the setting in \cite{BalazsStoevaRepresentationofhtInverseofaFrameMultiplier, StoevaBalazsOntheDualFrameInducedbyanInvertibleFrameMultiplier},  particularly the identities in \eqref{invofMultwithSpecificStructure1} and \eqref{invofMultwithSpecificStructure2}, to possibly noninvertible  multipliers.    
    This is not an easy task and as we will see later, we will need more than trivial extensions of the steps behind the motivating results. This is because extending \eqref{invofMultwithSpecificStructure1} and \eqref{invofMultwithSpecificStructure2} requires completing two tasks simultaneously: determining an appropriate type of a generalized inverse and determining a type of ``frame duality" condition that would be satisfied.   Finding answers to these questions, opens the door to many possibilities. Aside from further enriching the theory of invertibility of multipliers, this is very promising in real world as different generalized inverses are used in different applications. 

    Stoeva and Taub\"ock were the first to tackle these challenges in \cite{StoevaTauboeck} and showed that one can express the \textit{Moore-Penrose inverse} (sometimes also simply called the \textit{pseudo-inverse}) of a frame multiplier as another multiplier, with reciprocal symbol and with   the analysis and synthesis sequences satisfying frame duality relationship. We take one step further and consider a more extensive generalized inverse and a more general type of frame duality -- the oblique pseudoinverses and the oblique dual frames. 

    Suppose we are given a linear mapping $M:\Hs_1\to\Hs_2$ with closed range, and consider closed subspaces $W$ and $V$ satisfying $\Hs_1=W\oplus\Ker(M)$ and $\Hs_2=\ran(M)\oplus V^\perp.$ The \textit{oblique pseudoinverse} is uniquely defined for each pair of subspaces $V$ and $W$. As noted in \cite{WardetalANoteontheObliqueMatrixPseudoinverse}, such operator is considered in different occasions in the literature.  It was first considered by Robinson \cite{RobinsonOntheGeneralizedInverseofanArbitraryLinearTransformation} in 1962. Calling it then the \emph{generalized inverse with respect to a particular decomposition}, he considered arbitrary linear transformations between finite-dimensional spaces. Five years later, Langenhop \cite{LangenhopOnGeneralizedInversesofMatrices} elaborated on such generalized inverses in the setting of matrices, and showed its close relations to the Moore-Penrose inverse. Then in 1968, Milne  \cite{MilneOMP1968} discussed the same generalized inverse with different but equivalent definition, and called it the \emph{oblique matrix pseudoinverse}. The oblique matrix pseudoinverse also coincides with another generalized inverse introduced by Chipman \cite{ChipmanOnLeastSquareswithInsufficientObservations} in 1964, called the \emph{weighted pseudoinverse} of a matrix. The equivalence of these two inverses is proven by Ward, Boullion, and Lewis in 1971 \cite{WardetalANoteontheObliqueMatrixPseudoinverse}. 
    
    The extension of Milne's oblique matrix pseudoinverses to the case where $\Hs_1$ and $\Hs_2$ may be infinite-dimensional is first considered by Eldar in 2001 \cite{EldarQuantumSignalProcessing}. Together with Werther in \cite{EldarTobiasGeneralFrameworkforConsistentSamplinginHilbertSpaces}, they {redefined such generalized inverses by additionally requiring the given operator to have closed range}. However, this extension of Milne's definition to possibly infinite-dimensional Hilbert spaces is also independently and briefly considered in \cite[Remark 3.8]{CorachMaestripieriWeightedGeneralizedInversesObliqueProjectionsandLeast-SquaresProblems}, where it was noted to coincide with the \textit{algebraic generalized inverse} introduced by Nashed and Votruba \cite{NashedVotrubaAUnifiedOperatorTheoryofGeneralizedInverses} in 1976.

    \textit{Oblique duality}, on the other hand, is introduced by Eldar in \cite{EldarSamplingwithArbitrarySamplingandReconstructionSpacesandObliqueDualFrameVectors} stemming from the motivation to allow more flexibility in the choice of the subspaces where the analysis and synthesis sequences belong. Together with Christensen in \cite{ChristensenEldarObliqueDualFramesandShiftInvariantSpaces}, they continued the discussion of oblique duals with focus on shift-invariant spaces. Since then, oblique duality has been studied extensively in the literature extending concepts in classical duality to the oblique case. Some examples are the extensions of the dual fusion frames and dual fusion frame systems in \cite{HeinekenMorillasObliqueDualFusionFrames}, the approximate duals in \cite{DiazHeinekenMorillasApproximateobliqueDualFrames, DiazHeinekenMorillasObliqueDulalityforFusionFrames}, and the dual frame completion in \cite{XiaoZhuZengODFinFinDimHS, KooLimExtensionofBesselSeqtoODFSeqeunces, AceskaMalonzoVelascoODFcompletion}. 

    \subsection{Organization}

    The paper is organized as follows: in Section \ref{NotationsandPrelims}, we set our notations and provide well-known basic results and definitions to be used throughout the paper. In Section \ref{Obl Psinv sec}, we focus on oblique pseudoinverses, giving an equivalent definition of such operators. Our new but equivalent definition characterizes the action of the oblique pseudoinverses more directly, and will prove to be more convenient for our purposes later on. We then provide in Section \ref{framesforSubsSec} new results on frames for subspaces, that are instrumental in proving the results in the remaining sections. Section \ref{OblPsinvofMultSec} will feature the main results of this paper, where we will prove that the oblique pseudoinverse of a multiplier is indeed another multiplier of a special structure. We discuss more about the determined analysis and synthesis sequences in the resulting multiplier, in Section \ref{OntheframesPhisharpandPsisharpSec}. We continue with Section \ref{UsingtheCanonicalOblDualsSec} by exploring when the oblique pseudoinverse will utilize the canonical oblique dual. Finally, in Appendix \ref{closednessofranM}  we present the different scenarios when a multiplier has closed range, which is a vital requirement for the well-definedness of the oblique pseudoinverse.

    
    \section{Notations and Preliminaries} \label{NotationsandPrelims}

    Throughout this paper, $\Hs,\Hs_1,$ and $\Hs_2$ will denote separable Hilbert spaces. The corresponding inner products will always be denoted by $\langle\cdot,\cdot\rangle$, omitting the subscript of the appropriate Hilbert space as it will be clear in the context. By $(x_n)_{n=1}^\infty \subseteq\Hs$, we mean that $x_n\in\Hs$ for all $n\in\N$. We write $\Psi$ and $\Phi$ for sequences  $(\psi_n)_{n=1}^\infty $ and $(\phi_n)_{n=1}^\infty $ with elements from a given Hilbert space, respectively, while we write $m$ for a scalar sequence $(m_n)_{n=1}^\infty \subseteq\C$. A linear mapping $M:\Hs_1\to\Hs_2$ will be simply called an \textit{operator}, and it is said to be \emph{invertible} if there is a bounded operator $M^{-1}:\Hs_2\to\Hs_1$ satisfying $MM^{-1}=I_{\Hs_2}$ and $M^{-1}M=I_{\Hs_1}$, where $I_{\Hs_1}$ and $I_{\Hs_2}$ are the identity operators on $\Hs_1$ and $\Hs_2$, respectively. The range and the kernel of an operator $M$ will be denoted by $\ran(M)$ and $\Ker(M),$ respectively. For a closed subspace $W$ of $\Hs$, the orthogonal projection of $\Hs$ onto $W$ is denoted by $\pi_W$. If $\ran(M)$ is closed, then there exists a unique and bounded operator $M^\dagger:\Hs_2\to\Hs_1$ satisfying $MM^\dagger x=x$ for all $x\in\ran(M)$. This operator $M^\dagger$ is called the  \textit{Moore-Penrose inverse} or simply \emph{pseudo-inverse} of $M$ (see, e.g., \cite[Section 2.5]{OleIntroductiontoFramesandRieszBases}).

    \subsection{Frames and Multipliers}

    We discuss in the first part of this subsection basic facts and results on frames (see, e.g.,   \cite{OleIntroductiontoFramesandRieszBases}). 
    
    A sequence $\Psi\subseteq\Hs$ is called a \emph{frame for $\Hs$} \cite{DuffinSchaefferAclassofnonharmonicFourierSeries} if there are positive constants $A$ and $B$ such that
    \begin{equation}\label{frameineq}
        A\|h\|^2\le\sum_{n=1}^\infty|\langle h,\psi_n\rangle|^2\le B\|h\|^2, \quad\quad\text{for every }\,h\in \Hs.
    \end{equation}
      The constants $A$ and $B$ are called a \emph{lower} and an \emph{upper frame bound,} respectively. Sometimes they are  denoted by $A_\Psi$ and $B_\Psi$, to highlight the frame $\Psi$.  If at least the right inequality of \eqref{frameineq} is satisfied, then $\Psi$ is called a \emph{Bessel sequence in $\Hs$} with \textit{Bessel bound} $B$. 

    Throughout the paper, we will mostly consider frames for closed subspaces of $\Hs.$ Note that a Bessel sequence in a closed subspace of $\Hs$ is necessarily a Bessel sequence in $\Hs.$ This guarantees the well-definedness of some relevant operators that will be used later. 
    \begin{lemma}\label{BesselinsubspacemeansBesselinspace}
   		Let $W$ be a closed subspace of $\Hs$ and let $\Psi$ be a Bessel sequence in $W$ with Bessel bound $B_\Psi$. Then $\Psi$ is a Bessel sequence in $\Hs$ with the same Bessel bound $B_\Psi.$
    \end{lemma}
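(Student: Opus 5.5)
The plan is to decompose an arbitrary vector along $W$ and $W^\perp$ and use that every $\psi_n$ lies in $W$. Fix $h\in\Hs$ and write $h=\pi_W h+(I-\pi_W)h$, where $(I-\pi_W)h\in W^\perp$. Since $\Psi\subseteq W$, we have $\langle (I-\pi_W)h,\psi_n\rangle=0$ for every $n$. Hence
\begin{equation*}
\langle h,\psi_n\rangle=\langle \pi_W h,\psi_n\rangle \quad\text{for all } n\in\N .
\end{equation*}

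Next I apply the Bessel inequality in $W$ to the vector $\pi_W h\in W$. Combined with the identity above, this gives
\begin{equation*}
\sum_{n=1}^\infty|\langle h,\psi_n\rangle|^2=\sum_{n=1}^\infty|\langle \pi_W h,\psi_n\rangle|^2\le B_\Psi\|\pi_W h\|^2\le B_\Psi\|h\|^2 .
\end{equation*}
The last step uses $\|\pi_W\|\le 1$, which is the Pythagorean identity $\|h\|^2=\|\pi_W h\|^2+\|(I-\pi_W)h\|^2$.

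Since $h\in\Hs$ was arbitrary, $\Psi$ is a Bessel sequence in $\Hs$ with bound $B_\Psi$. There is no real obstacle here. The only points to state explicitly are that the inner products in $W$ and $\Hs$ agree, because $W$ carries the restricted inner product, and that $W$ is closed, so that the orthogonal projection $\pi_W$ exists.
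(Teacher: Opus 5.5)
Your proof is correct and is essentially the paper's argument: split $h$ into its $W$ and $W^\perp$ components, observe that the $W^\perp$ part is orthogonal to every $\psi_n$, apply the Bessel bound in $W$ to the $W$-component, and finish with $\|\pi_W h\|\le\|h\|$.
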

	\begin{proof}
		Let $f\in\Hs$. Then $f=f_1+f_2$ for some unique vectors $f_1\in W$ and $f_2\in W^\perp$. Since $\Psi$ consists of vectors in $W$ and $f_2\in W^\perp$, we obtain that
		\begin{align*}
			\sum_{n=1}^\infty  |\langle f,\psi_n\rangle|^2=\sum_{n=1}^\infty  |\langle f_1,\psi_n\rangle|^2\le B_\Psi\|f_1\|^2\le  B_\Psi\|f\|^2,
		\end{align*}
        as desired.
	\end{proof}
    
    If one considers an orthonormal basis for $\Hs$, then it is immediate to see that \eqref{frameineq} is satisfied with $A=1=B$. That is, frames generalize orthonormal bases. But contrary to bases, frames allow redundancy, which is very useful in certain applications, for instance, signal processing. In particular, frames may provide non-unique expansions of a signal via the so-called \emph{dual frames}.

    \begin{defi}
        Let $\Psi$ be a frame for  $\Hs$. Then a frame $\Phi$ for $\Hs$ is called a \emph{dual frame of $\Psi$} if 
        \begin{equation}\label{dualframerecons}
            h = \sum_{n=1}^\infty \langle h,\psi_n\rangle\phi_n=\sum_{n=1}^\infty \langle h,\phi_n\rangle\psi_n,\quad \quad\forall\,h\in \Hs.
        \end{equation}
    \end{defi}
    Equivalent definitions for a frame and a dual frame can be obtained via the so-called synthesis and analysis operators. Given a Bessel sequence $\Psi$ in $\Hs$, the operator 
    \begin{equation}\label{synthesisopdefn}
        T_\Psi:(c_n)_{n=1}^\infty \mapsto \sum_{n=1}^\infty  c_n\psi_n
    \end{equation}
    is well-defined from $\ell^2(\N)$ into $\Hs$ and it is called the \emph{synthesis operator} of $\Psi.$ Its adjoint $T^\ast_\Psi:\Hs\to\ell^2(\N)$ is actually given by 
    \begin{equation}
        T^\ast_\Psi:h\mapsto(\langle h,\psi_n\rangle)_{n=1}^\infty ,
    \end{equation}
    and it is called the \emph{analysis operator of $\Psi$}.  The composition $S_\Psi:=T_\Psi T^\ast_\Psi$ is called the \emph{frame operator} of the Bessel sequence $\Psi$. 
    
    It is well-known that the operator $T_\Psi$ determined by \eqref{synthesisopdefn} is well-defined (hence, bounded) from $\ell^2(\N)$ into $\Hs$ if and only if $\Psi$ is a Bessel sequence in $\Hs$. Moreover, a Bessel sequence $\Psi$ is a frame for $\Hs$ if and only if $\ran(T_\Psi)=\Hs$, if and only if $S_\Psi$ is invertible on $\Hs$. 

    Let $\Psi$ be a frame for $\Hs$. From \eqref{dualframerecons}, $\Phi$ is a dual frame of $\Psi$ if and only if $T_\Phi T^*_\Psi=T_\Psi T^*_\Phi=I_\Hs$. Note that $(S_\Psi^{-1}\psi_n)_{n=1}^\infty $ is always a dual frame of $\Psi$ and it is called the \emph{canonical dual} of $\Psi.$ If removing one element from $\Psi$ causes the remaining sequence to no longer be a frame for $\Hs$, then $\Psi$ is called a \emph{Riesz basis}\footnote{Riesz bases were introduced in a different but equivalent way by Nina Bari in \cite{BariFrenchPaper,BariBiorthogonalSystemsandBasesinHilbertspaces}} for $\Hs$. Otherwise, $\Psi$ is called a \emph{redundant} or an \emph{overcomplete frame}. A Riesz basis has the canonical dual frame as its only dual frame, while overcomplete frames have infinitely many dual frames in addition to the canonical dual.

    Consider a Bessel sequence $\Psi$ in $\Hs_1$ and a vector $h\in\Hs_1$. Modifying the scalars $T^*\und\Psi h=(\langle h,\psi_n\rangle)_{n=1}^\infty $ via multiplication with a scalar sequence, and using a different sequence for the synthesis, one can extend the frame operator to another important class of operators -- the so-called {multipliers}. Given sequences $\Psi\subseteq\Hs_1, \,\Phi\subseteq\Hs_2,$ and $m\subseteq\C$, the \textit{multiplier} $\mult$ is the operator   determined by
    \begin{equation}\label{multdef}
        \mult h = \sum_{n=1}^\infty m_n\langle h,\psi_n\rangle\phi_n,
    \end{equation}
    for those $h\in\Hs_1$ such that the series in \eqref{multdef} converges in $\Hs_2$. Here, $m$ is called the \emph{symbol} or the \emph{weight} of the multiplier, while $\Psi$ and $\Phi$ are called the \emph{analysis sequence } and the \emph{synthesis sequence} of the multiplier, respectively. Given a scalar sequence $m$,  $\mathcal{M}_m$ will denote the \textit{multiplication operator} given by $\mathcal M_m(c_n)_{n=1}^\infty =(m_nc_n)_{n=1}^\infty $. 
    
     If $\Psi$ and $\Phi$ are Bessel sequences in (resp., frames for) $\Hs_1$ and $\Hs_2$, respectively, then $\mult$ is called a \emph{Bessel multiplier} (resp., \emph{frame multiplier}), and sometimes  we will use the representation $\mult=
    T_\Phi\mathcal{M}_mT^*_\Psi$.  In the next sections, we will often consider bounded symbols $m$ ($\sup_{n} |m_n|<L$ for some $L>0$) or \textit{semi-normalized} symbols $m$ ($\alpha\le\inf_{n}|m_n|\le\sup_{n} |m_n|\le \beta<\infty$ for some positive $\alpha$ and $\beta$).   If $\Hs_1=\Hs_2$, $m=(1)$, $\Psi=\Phi$, and $\Psi$ is  a frame for the entire space $\Hs_1$,  then the multiplier reduces to the frame operator $S_\Psi$ of $\Psi$.

    \subsection{Oblique projections and oblique dual frames}

    As mentioned in Section \ref{GoalsandPlans}, we will utilize oblique dual frames. A key ingredient in defining oblique dual frames is the concept of oblique projection. Let $W$ and $Z$ be closed subspaces of $\Hs$ satisfying $\Hs=W\oplus Z.$ In most literature (see, e.g., \cite{BehrensScharfSignalProcessingApplicationsofObliqueProjectionOperators, ChristensenEldarObliqueDualFramesandShiftInvariantSpaces, OleIntroductiontoFramesandRieszBases}), the oblique projection onto $W$ along $Z$ is defined as the mapping that acts on $W$ as the identity, and acts on $Z$ as the zero map. In this paper, we opt to use a known equivalent definition, that will be consistent with our presentation of the oblique pseudoinverses in Section \ref{Obl Psinv sec}.

     \begin{defi}\label{oblprojmydef}
        Let $W$ and $Z$ be closed subspaces of $\Hs$ such that $\Hs=~W\oplus Z.$ Then every $h\in\Hs$ can be written uniquely as $h=h_1+h_2$, where $h_1\in W$ and $h_2\in\ Z$. Thus the mapping $\pi\und{W,Z}:h\mapsto h_1$ is well-defined from $\Hs$ onto $W$,
        and it is called the \emph{oblique projection onto $W$ along $Z$}.
    \end{defi}
    
    As noted earlier, $\ker(\pi\und{W,Z})=Z$ and $\ran(\pi\und{W,Z})=W.$ We collect and list below some properties of the oblique projection that will be used in the rest of the paper. {Proofs can be found in \cite[Section III.5.4]{KatoPerturbationTheoryforLinearOperators}, \cite[Lemma 2.1]{HeinekenMorillasObliqueDualFusionFrames} for (i) and (ii), respectively, while (iii) is straightforward from \cite[Theorem 2.3]{TangObliqueProjectionsBiorthogonalRieszBasesAndMultiwaveletsInHilbertSpaces}}.

    \begin{lemma}\label{propoblproj}
        Let $W$ and $Z$ be closed subspaces of $\Hs$ such that $\Hs=W\oplus Z.$ Then the following statements are true.
        \begin{enumerate}[(i)]
            \item $\pi\und{W,Z}:\Hs\to\Hs$ is bounded.
            \item $\pi\und{Z^\perp}\pi\und{W,Z}=\pi\und{Z^\perp}$ and $\pi\und{W,Z}\pi\und{Z^\perp}=\pi\und{
            W,Z}$.
            \item $\Hs=Z^\perp\oplus W^\perp$ and $\pi\und{W,Z}^*=\pi\und{Z^\perp,W^\perp}$.
        \end{enumerate}
    \end{lemma}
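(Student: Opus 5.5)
We prove the three statements of Lemma \ref{propoblproj} separately, using only Definition \ref{oblprojmydef} and basic Hilbert space facts. Write $P:=\pi\und{W,Z}$. From the uniqueness of the decomposition $h=h_1+h_2$, $P$ is linear, $P^2=P$, $Pw=w$ for $w\in W$, and $Pz=0$ for $z\in Z$.

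\emph{Step (i): boundedness.} We apply the closed graph theorem. Let $h_k\to h$ and $Ph_k\to g$. Then $Ph_k\in W$, and $W$ is closed, so $g\in W$. Also $h_k-Ph_k\in Z$, and $Z$ is closed, so $h-g\in Z$. Hence $h=g+(h-g)$ is a decomposition of $h$ along $W\oplus Z$. By uniqueness, $Ph=g$, so the graph of $P$ is closed and $P$ is bounded. This is the only step with real content. Everything else is algebra, and the one point to handle with care is that closedness of $W$ and $Z$ is essential here.

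\emph{Step (ii).} Take $h\in\Hs$. Since $(I-P)h\in Z$, we have $\pi\und{Z^\perp}(I-P)h=0$, which gives $\pi\und{Z^\perp}P=\pi\und{Z^\perp}$. For the second identity, write $h=\pi\und{Z^\perp}h+\pi\und{Z}h$. Since $P$ annihilates $Z$, we get $Ph=P\pi\und{Z^\perp}h$, that is, $P\pi\und{Z^\perp}=P$.

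\emph{Step (iii).} By (i) the adjoint $P^*$ is bounded. It satisfies $(P^*)^2=P^*$, so it is a bounded idempotent, and therefore $\Hs=\ran(P^*)\oplus\Ker(P^*)$ with $\ran(P^*)$ closed, because $\ran(P^*)=\Ker(I-P^*)$. We compute the two subspaces:
\begin{itemize}
\item $\Ker(P^*)=\ran(P)^\perp=W^\perp$.
\item $\ran(P^*)=\Ker(I-P^*)=\ran(I-P)^\perp=Z^\perp$, using $\ran(I-P)=Z$.
\end{itemize}
Hence $\Hs=Z^\perp\oplus W^\perp$. Moreover, $P^*$ acts as the identity on $Z^\perp$ and as zero on $W^\perp$. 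By uniqueness of the decomposition, $P^*=\pi\und{Z^\perp,W^\perp}$.
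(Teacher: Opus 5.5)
Your proof is correct. The paper gives no proof of this lemma and instead cites the literature: Kato for (i), Heineken--Morillas (Lemma 2.1) for (ii), and Tang (Theorem 2.3) for (iii). Your argument is therefore a genuinely different, self-contained route.

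\begin{enumerate}[(i)]
\item You use the closed graph theorem. The closedness of both $W$ and $Z$ is exactly what lets you identify $g\in W$ and $h-g\in Z$ and then invoke uniqueness.
\item You use the elementary facts that $\pi\und{Z^\perp}$ vanishes on $Z$ and that $I=\pi\und{Z^\perp}+\pi\und{Z}$.
\item You observe that $P^*$ is a bounded idempotent with $\Ker(P^*)=\ran(P)^\perp=W^\perp$ and $\ran(P^*)=\Ker((I-P)^*)=\ran(I-P)^\perp=Z^\perp$.
\end{enumerate}

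Your route shows that the decomposition $\Hs=Z^\perp\oplus W^\perp$ in (iii) is a consequence of (i). Boundedness of $\pi\und{W,Z}$ is what makes the adjoint available, and the direct sum then comes for free from idempotency. It also makes explicit the identities $\ran(\pi\und{W,Z}^*)=Z^\perp$ and $\pi\und{W,Z}^*h=h$ for $h\in Z^\perp$. The paper uses these tacitly later, for instance in the proof of Lemma~\ref{dualityrelationshipmeansframes} and in Theorem~\ref{PsisharpandPhisharpareframes}(ii). Citing is simply shorter.
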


    It is important to note that the oblique projection $\pi\und{W,Z}$ reduces to the orthogonal projection onto $W$, when $W=Z^\perp$. Further, an oblique projection is not always self-adjoint and in fact, is self-adjoint if and only if it is an orthogonal projection. More relations between the two types of projections are investigated in \cite{GrevilleSolutionoftheMatrixEqnandRelationsbetweenObliqueandorthogonalProjectors}.
    
    Below we recall the definition of an oblique dual frame. Using oblique dual frames, one can have the flexibility of choosing different subspaces where the analysis and synthesis would take place.

    \begin{defi}
        Let $W_1$ and $W_2$ be closed subspaces of $\Hs$ such that $\Hs=W_1\oplus W_2^\perp$.  Let $\Psi$ and $\Phi$ be frames for $W_1$ and $W_2$, respectively.  Then $\Psi$ is called an \emph{oblique dual frame of $\Phi$ on $W_1$} if $T_\Psi T_\Phi^*=\pi\und{W_1,W_2^\perp}$. 
    \end{defi}

    \begin{rem}
        The definition above is symmetric. Indeed, if $\Psi$ is an oblique dual frame of $\Phi$ on $W_1$, then by Lemma \ref{propoblproj} we have $\Hs=W_2\oplus W_1^\perp$ and $T_\Phi T^*_\Psi=(T_\Psi T_\Phi^*)^*=(\pi\und{W_1,W_2^\perp})^*=\pi\und{W_2,W_1^\perp}$, which means that $\Phi$ is an oblique dual frame of $\Psi$ on $W_2.$ Thus, we can also refer to $\Psi$ and $\Phi$ collectively as an \emph{oblique dual frame pair}. If $W_1=W_2$, then the oblique dual frame pair is a dual frame pair in the subspace $W_1$.
    \end{rem}

    Extending a result from the classical frame theory where duality relations of Bessel sequences lead to frames, to the oblique case, the following holds.

    \begin{lemma}\label{dualityrelationshipmeansframes}
        Let $W_1$ and $W_2$ be closed subspaces of $\Hs$ such that $\Hs=W_1\oplus W\und2^\perp$. 
        Let $\Psi$ and $\Phi$ be Bessel sequences in $W_1$ and $W_2$, respectively. If $T\und\Psi T^*\und{\Phi}=\pi\und{W_1,W_2^\perp}$, then $\Psi$ and $\Phi$ are frames for $W_1$ and $W_2$, respectively. 
    \end{lemma}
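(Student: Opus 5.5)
Since $\Psi$ and $\Phi$ are already Bessel, only the lower frame bounds are missing: one for $\Psi$ on $W_1$ and one for $\Phi$ on $W_2$. By the symmetry in the Remark it is enough to treat one of them, and I will do $\Psi$ on $W_1$. Taking adjoints of $T_\Psi T^*_\Phi=\pi\und{W_1,W_2^\perp}$ and using Lemma \ref{propoblproj}(iii) gives $T_\Phi T^*_\Psi=\pi\und{W_2,W_1^\perp}$, with $\Hs=W_2\oplus W_1^\perp$. So the hypotheses are symmetric in the two sequences, and the argument for $\Phi$ is the same with the roles of $W_1$ and $W_2$ swapped. By Lemma \ref{BesselinsubspacemeansBesselinspace}, both $T_\Psi$ and $T_\Phi$ are bounded operators from $\ell^2(\N)$ into $\Hs$, with norms at most $\sqrt{B_\Psi}$ and $\sqrt{B_\Phi}$.

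For the lower bound I would use $T_\Phi T^*_\Psi=\pi\und{W_2,W_1^\perp}$. Let $h\in W_1$. By Lemma \ref{propoblproj}(ii), applied to the decomposition $\Hs=W_2\oplus W_1^\perp$, we have $\pi\und{W_1}\pi\und{W_2,W_1^\perp}=\pi\und{W_1}$. Hence
\[
h=\pi\und{W_1}h=\pi\und{W_1}T_\Phi T^*_\Psi h,
\]
and therefore $\|h\|\le\|T_\Phi\|\,\|T^*_\Psi h\|\le\sqrt{B_\Phi}\,\bigl(\sum_n|\langle h,\psi_n\rangle|^2\bigr)^{1/2}$. Squaring gives
\[
\sum_n|\langle h,\psi_n\rangle|^2\ge B_\Phi^{-1}\|h\|^2 \quad\text{for all } h\in W_1 .
\]
Together with the Bessel property, this makes $\Psi$ a frame for $W_1$ with lower bound $1/B_\Phi$. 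The same argument with the roles swapped makes $\Phi$ a frame for $W_2$ with lower bound $1/B_\Psi$.

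The only delicate point is the direction of the projections. The identity $\pi\und{W_1}\pi\und{W_2,W_1^\perp}=\pi\und{W_1}$ needs the kernel of the oblique projection to be $W_1^\perp$. That is why I use the adjoint relation $T_\Phi T^*_\Psi=\pi\und{W_2,W_1^\perp}$ for the bound on $\Psi$, and not the original relation. If I wanted to avoid Lemma \ref{propoblproj}(ii), I could argue directly: $\pi\und{W_2,W_1^\perp}h-h\in W_1^\perp$, so projecting onto $W_1$ returns $h$ whenever $h\in W_1$.
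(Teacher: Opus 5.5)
Your proof is correct, but it takes a different route from the paper.

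The paper argues through ranges. From $T_\Phi T^*_\Psi=\pi_{W_2,W_1^\perp}$ it obtains $W_2=\ran(T_\Phi T^*_\Psi)\subseteq\ran(T_\Phi)\subseteq W_2$, so $T_\Phi$ maps $\ell^2(\N)$ onto $W_2$. It then invokes the standard characterization that a Bessel sequence in a closed subspace is a frame for that subspace if and only if its synthesis operator is onto it. The original relation $T_\Psi T^*_\Phi=\pi_{W_1,W_2^\perp}$ gives the same conclusion for $\Psi$.

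You instead verify the lower frame inequality directly, bounding the analysis operator from below through $h=\pi_{W_1}T_\Phi T^*_\Psi h$ for $h\in W_1$. You use the adjoint relation to handle $\Psi$, whereas the paper uses it to handle $\Phi$. The two arguments are dual to each other: surjectivity of $T_\Psi$ onto $W_1$ versus a lower bound for $T^*_\Psi$ on $W_1$.

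The paper's version is shorter, but it relies on the surjectivity characterization, which ultimately rests on the open mapping theorem, and it produces no constants. Yours is fully elementary and gives explicit lower frame bounds: $1/B_\Phi$ for $\Psi$ and $1/B_\Psi$ for $\Phi$. This mirrors the classical fact that Bessel sequences satisfying $T_\Phi T^*_\Psi=I$ are frames with exactly these bounds.

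Your treatment of the projection identity $\pi_{W_1}\pi_{W_2,W_1^\perp}=\pi_{W_1}$, whether via Lemma \ref{propoblproj}(ii) or directly, is correct. So is your derivation of the symmetric hypothesis via adjoints, which needs only $\Hs=W_1\oplus W_2^\perp$ and not the frame property assumed in the Remark.
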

    \begin{proof}
        {By Lemma \ref{BesselinsubspacemeansBesselinspace}, both $\Psi$ and $\Phi$ are Bessel sequences in $\Hs$ and so, $T_\Psi T^*_\Phi$ is a well-defined operator from $\Hs$ into $\Hs$}. Assume $T\und\Psi T^*\und{\Phi}=\pi\und{W_1,W_2^\perp}$ on $\Hs.$ Then $W_2=\ran(\pi^*\und{W_1,W_2^\perp})=\ran(T_\Phi T^*\und\Psi)\subseteq\ran(T_\Phi)\subseteq W_2$, implying that $T_\Phi$ maps $\ell^2(\N)$ onto $W_2$, which shows that $\Phi$ is a frame for $W_2.$ Furthermore, $T_\Phi T^*\und\Psi=\pi\und{{W_2,W_1^\perp}}$, which implies in a similar way as above that $\Psi$ is a frame for $W_1.$
    \end{proof}

   A given frame always has what is so-called the \emph{canonical oblique dual frame}. {This sequence was first introduced in \cite{EldarSamplingwithArbitrarySamplingandReconstructionSpacesandObliqueDualFrameVectors, EldarTobiasGeneralFrameworkforConsistentSamplinginHilbertSpaces}, where it was referred to as the (lone) oblique dual frame. The family of oblique dual frames known today, of which the canonical oblique dual frame is a part, was subsequently defined  in \cite{ChristensenEldarObliqueDualFramesandShiftInvariantSpaces}.} 
    \begin{defi}
        Let $W_1$ and $W_2$ be closed subspaces of $\Hs$ such that $\Hs=W_1\oplus W_2^\perp.$ Let $\Psi$ be a frame for $W_1$. The sequence $\pi\und
        {W_2,W_1^\perp}S^\dagger\und{\Psi}\Psi$ is called the \emph{canonical oblique dual of $\Phi$ on $W_2.$}
    \end{defi}
    
    The canonical oblique dual frame plays a similar role to the canonical dual frame -- every vector $f\in W_1$ can be reconstructed via analysis  using the canonical oblique dual frame and synthesis using the given frame. {Another similar role is that} the sequence of coefficients obtained from the analysis using the canonical oblique dual  provides the minimum $\ell^2-$norm among all possible coefficients in the reconstruction of $f$ (using synthesis of the given frame). This result was first proved in \cite{EldarSamplingwithArbitrarySamplingandReconstructionSpacesandObliqueDualFrameVectors} in the setting of finite-dimensional Hilbert spaces, and was proved to be true also in the infinite-dimensional setting in \cite{EldarTobiasGeneralFrameworkforConsistentSamplinginHilbertSpaces}.


    \section{Oblique pseudoinverses}\label{Obl Psinv sec}
	
	We split this section into two parts: first, we present select versions of the definitions of oblique pseudoinverses in literature and second, we give our own equivalent definition, that will be more convenient for our work. We fix our setting. 

    \begin{assu}\label{mainassum}
	      Let $M:\Hs_1\to\Hs_2$ be a bounded operator whose range is closed. Consider closed subspaces $W$ of $\Hs_1$ and $V$ of $\Hs_2$ such that
	\begin{align}
	    \Hs_1&=W\oplus \ker(M) \label{H1space}
        \\ \Hs_2&=\ran(M)\oplus V \label{H2space}.
	\end{align} 
	\end{assu}

    We start with the definition of the oblique pseudoinverse in the finite-dimensional setting due to Milne.

    \begin{defi}\cite{MilneOMP1968}\label{oblpsinvMilnedef}
        Let $\Hs_1$ and $\Hs_2$ be finite-dimensional inner product spaces over the field of complex numbers and let $M:\Hs_1\to\Hs_2$ be an operator. Suppose we have decompositions \eqref{H1space} and \eqref{H2space}, where $W$ is isomorphic to $\Ker(M)^\perp$. Then the \emph{oblique matrix pseudoinverse of $M$ with respect to the subspaces $V,W$} is the homomorphism $M^\dagger\und{W,V}:\Hs_2\to\Hs_1$ satisfying the following:
        \begin{enumerate}[(i)]
            \item $M^\dagger\und{W,V}M x = x$ for all $x\in W$,
            \item $M^\dagger\und{W,V} y = 0$ for all $y\in V$.
        \end{enumerate}
    \end{defi}

    \begin{rem}
        The oblique psuedoinverse is defined with respect to the subspaces $W$ and $V$. Note that the decompositions \eqref{H1space} and \eqref{H2space} are not necessarily orthogonal. Thus, the oblique pseudoinverse can be defined for a larger class of subspaces $W$ and $V$ satisfying \eqref{H1space} and \eqref{H2space}.  Interestingly, the class of oblique pseudoinverses over all possible subspaces $W$ and $V$ contains the Moore-Penrose inverse. Indeed, the Moore-Penrose inverse is obtained by taking $W=\Ker(M)^\perp$ and $V=\ran(M)^\perp.$
    \end{rem}   

    Oblique pseudoinverses in the setting of infinite-dimensional spaces were first considered by Eldar in her dissertation \cite{EldarQuantumSignalProcessing}, but we follow the convention in her follow-up paper with Werther \cite{EldarTobiasGeneralFrameworkforConsistentSamplinginHilbertSpaces}.
    
    \begin{defi}\label{oblpsinvYoninadef}\cite{EldarTobiasGeneralFrameworkforConsistentSamplinginHilbertSpaces}
		Suppose Assumption \ref{mainassum} holds. The \emph{oblique pseudoinverse of $M$ on $W$ along $V$} is the unique mapping $\psinv:\Hs_2\to\Hs_1$ satisfying
		\begin{align}
			M\psinv  &= \pi\und{\ran(M),V}  \label{ops cond1}\\
			\psinv M&=\pi\und{W,\ker(M)} \label{ops cond2} \\
			\ran(\psinv)&=W. \label{ops cond3}
		\end{align}
	\end{defi}          

    We now give our own definition for the oblique pseudoinverse (Definition \ref{oblpsinvmydef}), in a similar spirit as in the presentation of the oblique projection in Definition \ref{oblprojmydef}. For this purpose, first we make the following observation.

    \begin{lemma}\label{Lemmaforourpsinvdef}
        Suppose Assumption \ref{mainassum} holds. Then for any $y\in\Hs_2$, there exist unique vectors $x_y\in W$ and $z_y\in V$ such that $y=Mx_y +z_y.$
    \end{lemma}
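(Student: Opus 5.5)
Existence and uniqueness come from two facts: the direct sum decomposition \eqref{H2space} of $\Hs_2$, and the fact that $M$ restricted to $W$ is a bijection onto $\ran(M)$.

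\textbf{Existence.} Fix $y\in\Hs_2$. By \eqref{H2space}, there are unique vectors $r_y\in\ran(M)$ and $z_y\in V$ with $y=r_y+z_y$; indeed $r_y=\pi\und{\ran(M),V}y$ and $z_y=y-r_y$. Since $r_y\in\ran(M)$, pick any $u\in\Hs_1$ with $Mu=r_y$. Decompose $u$ by \eqref{H1space} as $u=x_y+k$ with $x_y\in W$ and $k\in\Ker(M)$. Then
\begin{equation*}
Mx_y=Mu-Mk=Mu=r_y,
\end{equation*}
so $y=Mx_y+z_y$ with $x_y\in W$ and $z_y\in V$.

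\textbf{Uniqueness.} Suppose $y=Mx+z=Mx'+z'$ with $x,x'\in W$ and $z,z'\in V$. Then $M(x-x')=z'-z$. The left side lies in $\ran(M)$ and the right side lies in $V$. Because the sum in \eqref{H2space} is direct, $\ran(M)\cap V=\{0\}$, so $z=z'$ and $M(x-x')=0$. Hence $x-x'\in W\cap\Ker(M)$. Because the sum in \eqref{H1space} is direct, this intersection is $\{0\}$, so $x=x'$.

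The argument uses only the algebraic directness of the two decompositions; closedness of $\ran(M)$ is not needed here. The only point that needs a word of justification is replacing an arbitrary preimage $u$ by its $W$-component. This works because $\Ker(M)$ is annihilated by $M$, which is exactly why $M|_W:W\to\ran(M)$ is injective and surjective.
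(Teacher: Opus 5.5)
Your proof is correct and follows the paper's argument: you split $y$ via \eqref{H2space}, pull the $\ran(M)$-component back to an element of $W$, and get uniqueness from $W\cap\Ker(M)=\{0\}$. You are slightly more careful than the paper in two places: you justify why a preimage can be chosen in $W$, which the paper only asserts, and you prove uniqueness of the pair $(x_y,z_y)$ in one step, whereas the paper fixes $z_y$ first.
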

    \begin{proof}
        Let $y\in\Hs_2$. From \eqref{H2space}, there exist unique vectors $y'\in\ran(M)$ and $z_y\in V$ such that $y=y'+z_y.$ Since $y'\in\ran(M)$, we must have $y'=Mx_y$ for some $x_y\in W.$ It is left to show that $x_y$ is unique. Suppose there exists another $x\in W$ such that $y=Mx+z_y$. Then $Mx_y+z_y=Mx+z_y$ and so $x_y-x\in\Ker(M).$ So, $x_y-x\in\Ker(M)\cap W$ and by \eqref{H1space}, we have $x_y=x$.
    \end{proof} 

    We are now ready to present our proposed definition of the oblique pseudoinverses.
    
    \begin{defi}\label{oblpsinvmydef}
        Suppose Assumption 1 holds. The \emph{oblique pseudoinverse of $M$ on $W$ along $V$} is the mapping $\psinv$ {determined on $\Hs_2$ as follows: for $y\in\Hs_2$, put $\psinv y:=x_y$}, where $x_y$ is the unique element in $W$ determined in Lemma \ref{Lemmaforourpsinvdef}.
    \end{defi}

    Aside from being a constructive definition, it is also easier to see from  our presentation of the oblique pseudoinverse that it extends the oblique matrix pseudoinverse of Milne in Definition \ref{oblpsinvMilnedef}. Indeed, it follows straightforward from Definition \ref{oblpsinvmydef} that $\psinv Mx=x$ for all $x\in W$ and $\psinv y =0$ for all $y\in V$.

    We now prove that our definition of the oblique pseudoinverse is equivalent to the definition by Eldar and Werther in Definition \ref{oblpsinvYoninadef}.

    \begin{prop}\label{EldarDefnandOursareEquivalent}
        Definition \ref{oblpsinvYoninadef}  and Definition \ref{oblpsinvmydef} are equivalent.
    \end{prop}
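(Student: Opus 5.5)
To prove the equivalence, I will show two things. First, the mapping $\psinv$ of Definition \ref{oblpsinvmydef} satisfies \eqref{ops cond1}--\eqref{ops cond3}. Second, any mapping satisfying \eqref{ops cond1}--\eqref{ops cond3} coincides with it. The second part also yields the uniqueness that Definition \ref{oblpsinvYoninadef} takes for granted.

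\emph{Step 1: the mapping of Definition \ref{oblpsinvmydef} satisfies the three conditions.}
\begin{itemize}
\item For linearity, if $y=Mx_y+z_y$ and $y'=Mx_{y'}+z_{y'}$, then $\alpha y+\beta y'=M(\alpha x_y+\beta x_{y'})+(\alpha z_y+\beta z_{y'})$. The first summand has argument in $W$ and the second lies in $V$, so uniqueness in Lemma \ref{Lemmaforourpsinvdef} gives $x_{\alpha y+\beta y'}=\alpha x_y+\beta x_{y'}$.
\item For \eqref{ops cond1}, write $y=Mx_y+z_y$ with $Mx_y\in\ran(M)$ and $z_y\in V$. This is exactly the decomposition \eqref{H2space}, so $M\psinv y=Mx_y=\pi\und{\ran(M),V}y$.
\item For \eqref{ops cond2}, take $h\in\Hs_1$ and split $h=h_1+h_2$ with $h_1\in W$ and $h_2\in\Ker(M)$ via \eqref{H1space}. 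Then $Mh=Mh_1+0$, and uniqueness gives $\psinv Mh=h_1=\pi\und{W,\ker(M)}h$.
\item For \eqref{ops cond3}, the inclusion $\ran(\psinv)\subseteq W$ holds by construction. Conversely, for $x\in W$ we have $\psinv Mx=x$, so $W\subseteq\ran(\psinv)$.
\end{itemize}

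\emph{Step 2: any mapping satisfying the three conditions equals $\psinv$.} Let $G:\Hs_2\to\Hs_1$ satisfy \eqref{ops cond1}--\eqref{ops cond3}, and fix $y\in\Hs_2$. Write $y=Mx_y+z_y$ as in Lemma \ref{Lemmaforourpsinvdef}. By \eqref{ops cond3}, $Gy\in W$. By \eqref{ops cond1}, $MGy=\pi\und{\ran(M),V}y=Mx_y$. Hence $Gy-x_y\in W\cap\Ker(M)=\{0\}$, so $Gy=x_y=\psinv y$.

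Only \eqref{ops cond1} and \eqref{ops cond3} are needed in Step 2; \eqref{ops cond2} is not used there. This step is the only point requiring care, namely to avoid presupposing the uniqueness asserted in Definition \ref{oblpsinvYoninadef}, and the argument settles it directly. Step 1 is routine bookkeeping with the two direct-sum decompositions.

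Boundedness is not part of either definition, so it need not be proved here. If wanted, it follows from the identity $\psinv=(M|_W)^{-1}\pi\und{\ran(M),V}$, since $M|_W:W\to\ran(M)$ is a bijection between Banach spaces and has a bounded inverse, and $\pi\und{\ran(M),V}$ is bounded by Lemma \ref{propoblproj}.
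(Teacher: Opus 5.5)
Your proof is correct and has the same two-direction structure as the paper's: you check \eqref{ops cond1}--\eqref{ops cond3} for the map of Definition \ref{oblpsinvmydef}, and you show that any map satisfying them sends $y$ to $x_y$. The one difference is in the second direction, where you cancel directly via $W\cap\Ker(M)=\{0\}$ using only \eqref{ops cond1} and the inclusion half of \eqref{ops cond3}; the paper instead applies the map to $M\psinv y=Mx_y$ and invokes \eqref{ops cond2} twice, so your argument is slightly leaner and also shows that \eqref{ops cond2} is not needed for uniqueness.
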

    \begin{proof}
      We first assume $\psinv$ is defined as in Definition \ref{oblpsinvYoninadef}. Let $y\in\Hs_2$. From \eqref{ops cond1} and Lemma \ref{Lemmaforourpsinvdef}, we have
    \begin{align*}
        M\psinv y =  \pi\und{\ran(M),V}y= \pi\und{\ran(M),V}(Mx_y+z_y)=Mx_y,
    \end{align*}
    where $x_y$ and $z_y$ are the elements determined in Lemma \eqref{Lemmaforourpsinvdef}.
    {Using} \eqref{ops cond2}, it then follows that
    \begin{align*}
        \pi\und{W,\ker(M)}\psinv  y =\psinv (M\psinv y) = \psinv M x_y=\pi\und{W,\Ker(M)} x_y = x_y.
    \end{align*}
    But condition \eqref{ops cond3} implies that $\psinv y = \pi\und{W,\ker(M)}\psinv y.$ In particular, we see here that $\psinv y=x_y,$ as defined in Definition \ref{oblpsinvmydef}. 

    Conversely, let $\psinv$ be given as in Definition \ref{oblpsinvmydef}. We now show that $\psinv$  also satisfies conditions \eqref{ops cond1}-\eqref{ops cond3} in Definition \ref{oblpsinvYoninadef}. Clearly, we have $\ran(\psinv)=W.$ Let $y\in\Hs_2$. Considering the vectors $x_y$ and $z_y$ determined in Lemma \ref{Lemmaforourpsinvdef}, we obtain
        \begin{align*}
            M(\psinv y) = Mx_y = \pi\und{\ran(M),V}(Mx_y+z_y) = \pi\und{\ran(M),V}y,
        \end{align*}
        showing condition \eqref{ops cond1}. Finally, for any $u\in\Hs_1$, we can write $u=u_1+u_2$ uniquely {with} $u_1\in W$ and $u_2\in\Ker(M)$, and thus,
        \begin{align*}
            \psinv M u =\psinv M (u_1+u_2) = \psinv(Mu_1)=u_1=\pi\und{W,\Ker(M)}u,
        \end{align*}
        which proves condition \eqref{ops cond2}.
    \end{proof}
    
    Thereafter, we assume Assumption \ref{mainassum} and we use Definition \ref{oblpsinvmydef} for the oblique pseudoinverse.  {We end this section by listing some basic properties of the oblique pseudoinverse that we will use later. These properties can be proven straightforward using Definition \ref{oblpsinvmydef}.
    \begin{prop}\label{propertiesofoblpsinvs}
        Under Assumption \ref{mainassum}, the following hold for the oblique pseudoinverse $\psinv$:
        \begin{enumerate}[(i)]
            \item $\psinv$ is a linear and bounded mapping from $\Hs_2$ into $\Hs_1$.
            \item  $\psinv M\psinv = \psinv$ and $M\psinv M = M$.
        \end{enumerate}
    \end{prop}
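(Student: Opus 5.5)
Linearity comes first, read directly off Definition \ref{oblpsinvmydef} and the uniqueness part of Lemma \ref{Lemmaforourpsinvdef}. For $y_1,y_2\in\Hs_2$ and scalars $a,b$, write $y_i=Mx_{y_i}+z_{y_i}$. Then
\[
ay_1+by_2=M(ax_{y_1}+bx_{y_2})+(az_{y_1}+bz_{y_2}),
\]
where $ax_{y_1}+bx_{y_2}\in W$ and $az_{y_1}+bz_{y_2}\in V$. Uniqueness then gives $\psinv(ay_1+by_2)=a\psinv y_1+b\psinv y_2$.

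For (ii), the identities follow from the two defining features of the construction. For $x\in W$ the decomposition of $Mx$ is $Mx+0$, so $\psinv Mx=x$. For any $y\in\Hs_2$ we have $M\psinv y=Mx_y=\pi\und{\ran(M),V}y$, which is exactly the computation already done in the proof of Proposition \ref{EldarDefnandOursareEquivalent}.
\begin{itemize}
\item \emph{First identity.} Since $\psinv y=x_y\in W$, we get $\psinv M\psinv y=\psinv Mx_y=x_y=\psinv y$.
\item \emph{Second identity.} For $u\in\Hs_1$, write $u=u_1+u_2$ with $u_1\in W$ and $u_2\in\Ker(M)$. Then $M\psinv Mu=M\psinv Mu_1=Mu_1=Mu$.
\end{itemize}

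The main obstacle is boundedness in (i), which needs closedness of $\ran(M)$ and of the subspaces. I would use the factorization
\[
\psinv=(M|_W)^{-1}\pi\und{\ran(M),V}.
\]
\begin{itemize}
\item The projection $\pi\und{\ran(M),V}$ is bounded on $\Hs_2$ by Lemma \ref{propoblproj}(i). This uses that $\ran(M)$ and $V$ are closed with $\Hs_2=\ran(M)\oplus V$.
\item The restriction $M|_W:W\to\ran(M)$ is a bounded linear bijection. It is injective because $W\cap\Ker(M)=\{0\}$. It is surjective because $M=M\pi\und{W,\Ker(M)}$, so $\ran(M)=M(W)$.
\item Both $W$ and $\ran(M)$ are closed, hence Banach spaces. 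The bounded inverse theorem therefore makes $(M|_W)^{-1}$ bounded.
\end{itemize}
By Lemma \ref{Lemmaforourpsinvdef}, $x_y=(M|_W)^{-1}\pi\und{\ran(M),V}y$. Thus $\psinv$ is a composition of bounded operators and hence bounded.

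Alternatively, one could invoke the closed graph theorem directly. Suppose $y_k\to y$ and $x_{y_k}\to x$. Then $x\in W$, and $z_{y_k}=y_k-Mx_{y_k}\to y-Mx$, which lies in $V$ since $V$ is closed. Uniqueness gives $x=x_y$. Either route suffices.
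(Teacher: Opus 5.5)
Your proof is correct and follows the route the paper indicates. The paper gives no details beyond saying these properties follow straightforwardly from Definition~\ref{oblpsinvmydef}, and your verification of linearity and of both identities via the uniqueness in Lemma~\ref{Lemmaforourpsinvdef} is exactly that argument. The one step that is not purely algebraic is boundedness, and you handle it correctly with the factorization $M^\#_{W,V}=(M|_W)^{-1}\pi_{\ran(M),V}$ and the bounded inverse theorem (or your closed graph argument), which supplies the functional-analytic input the paper leaves implicit.
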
}


    \section{On Frames for Subspaces} \label{framesforSubsSec}

    While frames have been thoroughly discussed through the years, frames for subspaces have not been explored as much. In this section, we highlight some properties of a frame for a subspace that up to our knowledge, has never been shed light on in the literature. We start by looking at the range and kernel of the frame operator and its Moore-Penrose inverse. 
    \begin{prop}\label{ranandkerofpseudoinvofframeop}
        Let $W$ be a closed subspace of $\Hs$ and let $\Psi$ be a frame for $W$ (hence, a Bessel sequence in $\Hs)$. Consider the frame operator $S_\Psi:\Hs\to\Hs$. Then the following statements hold.
        \begin{enumerate}[(i)]
            \item $\ran(S_\Psi)=W$ and so $S_\Psi^\dagger$ is well-defined. Moreover, $\ran(S^\dagger_\Psi)=W$;
            \item $\ker(S_\Psi)=W^\perp=\ker(S^\dagger\und\Psi).$
        \end{enumerate}
    \end{prop}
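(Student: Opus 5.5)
We prove (ii) first for $S_\Psi$, then (i) for $S_\Psi$, and finally transfer both to $S^\dagger_\Psi$ using the basic properties of the Moore--Penrose inverse of a closed-range operator.

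\emph{Kernel of $S_\Psi$.} By Lemma \ref{BesselinsubspacemeansBesselinspace}, $\Psi$ is Bessel in $\Hs$, so $T_\Psi:\ell^2(\N)\to\Hs$ and $S_\Psi=T_\Psi T_\Psi^*$ are bounded. Moreover $S_\Psi$ is self-adjoint and positive, with
\[
\langle S_\Psi f,f\rangle=\sum_{n=1}^\infty|\langle f,\psi_n\rangle|^2 .
\]
If $S_\Psi f=0$, then $\langle f,\psi_n\rangle=0$ for all $n$. Write $f=f_1+f_2$ with $f_1\in W$ and $f_2\in W^\perp$. Since $\psi_n\in W$, this gives $\sum_n|\langle f_1,\psi_n\rangle|^2=0$, and the lower frame bound for $W$ yields $f_1=0$, so $f\in W^\perp$. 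Conversely, if $f\in W^\perp$, then $T^*_\Psi f=0$ because every $\psi_n\in W$. Hence $\ker(S_\Psi)=W^\perp$.

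\emph{Range of $S_\Psi$.} Since $\Psi$ is a frame for $W$, we have $\ran(T_\Psi)=W$; note that $T_\Psi$ maps into $W$ because $W$ is closed and all $\psi_n\in W$. Clearly $\ran(S_\Psi)\subseteq\ran(T_\Psi)=W$. For the reverse inclusion, consider the restriction $S_\Psi|_W:W\to W$. For $f\in W$,
\[
\langle S_\Psi f,f\rangle\ge A\|f\|^2 .
\]
So $S_\Psi|_W$ is a self-adjoint operator on $W$ that is bounded below by $A$, hence invertible on $W$ (Lax--Milgram, or the standard frame-operator argument applied inside the Hilbert space $W$). Therefore $\ran(S_\Psi)\supseteq S_\Psi(W)=W$, so $\ran(S_\Psi)=W$. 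This range is closed, so $S_\Psi^\dagger$ is well defined by the preliminaries.

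\emph{Transfer to $S^\dagger_\Psi$.} Here I use the standard facts that for a closed-range operator $U$ one has $\ran(U^\dagger)=\ker(U)^\perp$ and $\ker(U^\dagger)=\ran(U)^\perp$, as in \cite[Section 2.5]{OleIntroductiontoFramesandRieszBases}. With $U=S_\Psi$ these give
\[
\ran(S^\dagger_\Psi)=(W^\perp)^\perp=W,\qquad \ker(S^\dagger_\Psi)=W^\perp .
\]
If one prefers to rely only on the paper's definition ($S_\Psi S_\Psi^\dagger x=x$ for $x\in\ran(S_\Psi)$), these facts must be checked directly. This is the step I expect to need the most care, since the paper's one-line definition suppresses the normalization $\ran(U^\dagger)=\ker(U)^\perp$, $U^\dagger|_{\ran(U)^\perp}=0$. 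I would cite that normalization explicitly from the reference and then conclude.
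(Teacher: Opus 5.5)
Your proof is correct and follows essentially the same route as the paper: you get the kernel from the lower frame bound on $W$ (the paper instead writes $S_\Psi x=S_\Psi\pi_W x$ and uses injectivity of $S_\Psi|_W$), the range from invertibility of $S_\Psi|_W$ on $W$, and the claims for $S_\Psi^\dagger$ from the standard Moore--Penrose normalization. The paper uses $\ran(S_\Psi^\dagger)=\ran(S_\Psi^*)$ and $\ker(S_\Psi^\dagger)=\ker(S_\Psi^*)$ together with self-adjointness, which for a closed-range operator is equivalent to your $\ran(U^\dagger)=\ker(U)^\perp$ and $\ker(U^\dagger)=\ran(U)^\perp$.
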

    \begin{proof}
        We will first prove that $\ker(S_\Psi)=W^\perp$. Observe that $\Ker(S_\Psi)=\Ker(T_\Psi T^\ast_\Psi)\supseteq\Ker(T^\ast_\Psi)=\ran(T_\Psi)^\perp=W^\perp$, since $\Psi$ being a frame for $W$ implies that $\ran(T_\Psi)=W.$ Now we can express any $x\in\Hs$  as a sum $\pi\und W x+\pi\und{W^\perp}x$ and we have
        \begin{align*}
            S_\Psi x=S_\Psi (\pi\und W x+\pi\und{W^\perp}x)=S_\Psi (\pi\und Wx).
        \end{align*}
        Thus, $S_\Psi x=0$ iff $S_\Psi(\pi\und W x)=0.$ But $\Psi$ being a frame for $W$ implies that $S_\Psi|\und W$ is injective and so, $\Ker(S_\Psi|\und W)=\{0\}.$ Therefore, $S_\Psi x=0$  iff $\pi\und{W}x=0$ iff $x\in W^\perp,$ completing the proof that $\Ker(S_\Psi)=W^\perp.$

        Now from $\Hs=W\oplus W^\perp$ and from $\Ker(S_\Psi)=W^\perp$, it follows that $\ran(S_\Psi)=\ran(S_\Psi|\und W)$, and since $S_\Psi|\und W$ is invertible on $W$, we have $\ran(S_\Psi)=W$. Hence, $S^\dagger\und\Psi$ is well-defined on $\Hs$.  The rest of the results follow from the facts that $S_\Psi$ is self-adjoint, $\ran(S^*\und\Psi)=\ran(S^\dagger\und\Psi)$, and $\Ker(S^*\und\Psi)=\Ker(S^\dagger\und\Psi)$ (see e.g. \cite[Section 2.5]{OleIntroductiontoFramesandRieszBases}).
    \end{proof}

    We prove next that a frame for a subspace is uniquely determined by the set of its oblique dual frames. This is an extension of what was shown in \cite{BalazsStoevaRepresentationofhtInverseofaFrameMultiplier} that a frame for the whole space is uniquely determined by its set of dual frames.

    \begin{prop}\label{framecharacterizedbyitssetofODF}
        Let $\Hs$ be a separable Hilbert space and let $W_1$ and $W_2$ be closed subspaces of $\Hs$ satisfying $\Hs=W_1\oplus W_2^\perp$. Let $\Phi$ be a frame for $W_2$ and let $\mathcal{P}(\Phi)$ denote the collection of all oblique dual frames of $\Phi$ on $W_1$. Then the following statements hold.
        \begin{enumerate}[(i)]
            \item $\displaystyle\overline{\bigcup_{\Phi^\text{od}\in\mathcal{P}(\Phi)} \ran(T^\ast_{\Phi^\text{od}})}=\ell^2.$
            \item Let $G$ be a frame for $W_2$. If every oblique dual frame of $\Phi$ on $W_1$ is also an oblique dual frame of $G$ on $W_1$, then $\Phi=G.$
        \end{enumerate}
    \end{prop}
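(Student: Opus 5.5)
The plan is to describe the set $\mathcal{P}(\Phi)$ as an affine family around one fixed oblique dual. Then (i) reduces to a direct construction, and (ii) follows from (i) by a density argument.

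For (i), fix any oblique dual $\Psi\in\mathcal P(\Phi)$; the canonical oblique dual can serve here, and it exists as recalled in Section \ref{NotationsandPrelims}. Let $U:\ell^2\to W_1$ be a bounded operator with $UT_\Phi^*=0$, and let $\Psi'$ be the sequence $(\psi_n+Ue_n)_{n=1}^\infty$, so that $T_{\Psi'}=T_\Psi+U$. Then $\Psi'$ is a Bessel sequence in $W_1$ and $T_{\Psi'}T_\Phi^*=\pi\und{W_1,W_2^\perp}$. By Lemma \ref{dualityrelationshipmeansframes}, $\Psi'$ is a frame for $W_1$, hence $\Psi'\in\mathcal P(\Phi)$. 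The admissible $U$ I will use are rank one: $Uc=\langle c,d\rangle w$ with $d\in\Ker(T_\Phi)=\ran(T_\Phi^*)^\perp$ and $w\in W_1$. For these, $T_{\Psi'}^*h=T_\Psi^*h+\langle h,w\rangle d$.

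I will in fact show that $\bigcup_{\Phi^{od}}\ran(T^*_{\Phi^{od}})=\ell^2$, with no closure needed. This sidesteps the issue that a closure of a union of subspaces need not be a subspace. Assume first that $W_2\neq\{0\}$; if $W_2=\{0\}$, the condition $\Hs=W_1\oplus W_2^\perp$ forces $W_1=\{0\}$, and that degenerate case is handled separately.
\begin{enumerate}[(a)]
\item Take $c\in\ell^2$. Since $\Phi$ is a frame for $W_2$, $\ran(T_\Phi^*)$ is closed and $\ell^2=\ran(T_\Phi^*)\oplus\Ker(T_\Phi)$, so I can write $T_\Phi c=S_\Phi g$ with $g\in W_2$.
\item Choose $h\in W_2$ with $\pi\und{W_2,W_1^\perp}h=S_\Phi g$. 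Take $h=S_\Phi g$ if this is nonzero; otherwise take any nonzero $h\in W_2$ and replace the target accordingly. Since $T_\Phi T_\Psi^*=\pi\und{W_2,W_1^\perp}$, the vector $d:=c-T_\Psi^*h$ lies in $\Ker(T_\Phi)$.
\item Since $h\in W_2\setminus\{0\}$ and $W_2\cap W_1^\perp=\{0\}$, there is $w\in W_1$ with $\langle h,w\rangle\neq0$. Set $Uc'=\overline{\langle h,w\rangle}^{-1}\langle c',d\rangle w$. Then $T_{\Psi'}^*h=T_\Psi^*h+d=c$.
\end{enumerate}
The step I expect to need the most care is (b) when $T_\Phi c=0$. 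There I must still produce a nonzero $h$, and the identity $T_\Phi T_\Psi^*h=T_\Phi c$ then requires $\pi\und{W_2,W_1^\perp}h=0$, which is impossible for $h\in W_2\setminus\{0\}$.

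My fix for that case is to split $c$ as a sum of two vectors $c=c_1+c_2$ with $T_\Phi c_i\neq0$, for instance $c_1=c+T_\Phi^*g_0$ and $c_2=-T_\Phi^*g_0$ with $g_0\in W_2\setminus\{0\}$. Each $c_i$ then lies in the range of some analysis operator. To get $c$ itself, I will use the slack in choosing $h$: for $T_\Phi c=0$, take $h\neq0$ with $T_\Phi T_\Psi^*h$ arbitrary, and absorb the mismatch into $d$ by choosing $\Psi$ itself differently. Alternatively, I will simply prove the orthogonal-complement version: if $c\perp\ran(T^*_{\Phi^{od}})$ for all $\Phi^{od}$, then $T_{\Phi^{od}}c=0$ for all of them. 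Applying this to both $\Psi$ and $\Psi'$ gives $Uc=0$ for every admissible $U$, hence $c\perp\Ker(T_\Phi)$, so $c=T_\Phi^*h$ with $h\in W_2$. Then $0=T_\Psi T_\Phi^*h=\pi\und{W_1,W_2^\perp}h$, so $h\in W_2\cap W_2^\perp$ and $c=0$. Combined with the exact construction above for all $c\notin\Ker(T_\Phi)$, which is a dense set whenever $\ran(T_\Phi^*)\neq\{0\}$, this gives (i).

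For (ii), every $\Phi^{od}\in\mathcal P(\Phi)$ satisfies $T_{\Phi^{od}}T_G^*=\pi\und{W_1,W_2^\perp}=T_{\Phi^{od}}T_\Phi^*$. Taking adjoints gives $T_GT^*_{\Phi^{od}}=T_\Phi T^*_{\Phi^{od}}$. So the bounded operators $T_G$ and $T_\Phi$ agree on $\bigcup\ran(T^*_{\Phi^{od}})$, which is dense in $\ell^2$ by (i); hence $T_G=T_\Phi$. Evaluating at the canonical basis vectors $e_n$ yields $g_n=\phi_n$ for all $n$, i.e. $G=\Phi$.
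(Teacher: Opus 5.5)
The argument you end up assembling is correct, but two sentences in it are false and should be deleted. The first is the claim that $\bigcup_{\Phi^{\mathrm{od}}}\ran(T^*_{\Phi^{\mathrm{od}}})=\ell^2$ ``with no closure needed''. The second is the fix of ``absorbing the mismatch into $d$ by choosing $\Psi$ itself differently''. For \emph{every} $\Phi^{\mathrm{od}}\in\mathcal P(\Phi)$ and every $h\in\Hs$ one has $T_\Phi T^*_{\Phi^{\mathrm{od}}}h=\pi_{W_2,W_1^\perp}h$. So if $c=T^*_{\Phi^{\mathrm{od}}}h\in\Ker(T_\Phi)$, then $h\in\Ker(\pi_{W_2,W_1^\perp})=W_1^\perp$, and since $\Phi^{\mathrm{od}}\subseteq W_1$ this gives $c=0$.

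Hence the union meets $\Ker(T_\Phi)$ only in $0$, and no choice of oblique dual can change that. Splitting $c=c_1+c_2$ does not help either, because the union is not closed under addition. Combined with your construction (a)--(c), the union is exactly $(\ell^2\setminus\Ker(T_\Phi))\cup\{0\}$. So the closure in (i) is genuinely needed unless $\Phi$ is a Riesz basis for $W_2$. Also, in the degenerate case $W_1=W_2=\{0\}$ the union is $\{0\}$ and (i) is simply false. That case must be excluded rather than ``handled separately''; the paper tacitly excludes it as well when it takes an orthonormal basis of $W_1$.

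With this cleaned up, your route differs from the paper's in a worthwhile way. The paper argues only by orthogonal complements. It takes $c$ orthogonal to the union and quotes the Christensen--Eldar parametrization \eqref{formforobliquedualframes} of all oblique duals by Bessel sequences $\Theta$ in $W_1$. With $\Theta^{(k)}$ supported at the single index $k$, it reads off $c_k=0$ from $T_{\widetilde\Phi}c=0$.

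Your orthogonal-complement argument is the same idea made self-contained. You manufacture oblique duals as $T_\Psi+U$ with $UT_\Phi^*=0$, certified by Lemma \ref{dualityrelationshipmeansframes}; the paper's perturbation $U=T_\Theta(I-T_\Phi^*T_{\widetilde\Phi})$ is of this type. You then use rank-one $U$ and conclude via $c\in\Ker(T_\Phi)^\perp=\ran(T_\Phi^*)$ and $\pi_{W_1,W_2^\perp}h=0$, rather than coordinatewise.

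Either version shows that the closed \emph{linear span} of the union is $\ell^2$. That is all (ii) needs, since $T_G-T_\Phi$ is linear and bounded, and your proof of (ii) is then identical to the paper's.

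Your explicit construction (a)--(c) buys more. It shows the union itself contains $\ell^2\setminus\Ker(T_\Phi)$, which is the complement of a proper closed subspace and hence dense. That gives (i) literally as stated, for the closure of a union of subspaces, which an orthogonal-complement argument does not by itself deliver. In fact (a)--(c) plus this density observation already proves (i) on its own.
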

    \begin{proof}
        The proof  follows closely the idea of the proof of \cite[Theorem 1.2]{BalazsStoevaRepresentationofhtInverseofaFrameMultiplier}.
        \begin{enumerate}[(i)]
            \item Let $c=(c_n)_{n=1}^\infty \in\left(\bigcup_{\Phi^\text{od}\in\mathcal{P}(\Phi)}\ran(T^\ast_{\Phi^\text{od}})\right)^\perp$. Then  $\langle T_{\Phi^\text{od}}c,h\rangle=\langle c,T^\ast_{\phi^\text{od}}h\rangle=0$ for every $\Phi^\text{od}\in\mathcal{P}(\Phi)$ and for every $h\in\Hs.$ As a result, we have 
            \begin{equation}\label{Tcis0}
                T\und{\Phi^\text{od}}c=0, \quad\forall\,\Phi^\text{od}\in\mathcal{P}(\Phi).
            \end{equation}
            From \cite[Theorem 3.2]{ChristensenEldarObliqueDualFramesandShiftInvariantSpaces}, every oblique dual frame of $\Phi$ on $W_1$ is of the form
            \begin{equation}\label{formforobliquedualframes}
                \Phi^\text{od}=\left(\pi\und{W_1,W_2}S_\Phi^\dagger\phi_n+\theta_n-\sum_{j=1}^\infty\langle S_\Phi^\dagger\phi_n,\phi_j\rangle\theta_j\right)_{n=1}^\infty 
            \end{equation}
            where $\Theta=(\theta_n)_{n=1}^\infty $ is a Bessel sequence in $W_1$.
            Since $\phi_j=\pi\und{W_2,W_1^\perp}\phi_j=\pi\und{W_1,W_2^\perp}^\ast\phi_j$ for all $j\in\N$, we obtain an equivalent representation of \eqref{formforobliquedualframes} given by
            \begin{equation*}
                \Phi^\text{od}=\left(\widetilde\phi_n+\theta_n-\sum_{j=1}^\infty\langle \widetilde\phi_n,\phi_j\rangle\theta_j\right)_{n=1}^\infty ,
            \end{equation*}
            where $\widetilde\phi_n=\pi\und{W_1,W_2^\perp}S_\Phi^\dagger\phi_n$ is the canonical oblique dual frame of $\Phi$ on $W_1$. Using \eqref{Tcis0}, we obtain
            \begin{equation}\label{Tcis0again}
                0=T_{\widetilde\Phi}c+\sum_{n=1}^\infty c_n\left(\theta_n-\sum_{j=1}^\infty \langle\widetilde\phi_n,\phi_j\rangle\theta_j\right)=\sum_{n=1}^\infty c_n\theta_n-\sum_{n=1}^\infty\sum_{j=1}^\infty c_n\langle\widetilde\phi_n,\phi_j\rangle\theta_j
            \end{equation}
            for any Bessel sequence $\Theta$ in $W_1.$ For every $k\in\N$, take the Bessel sequence $\Theta^{(k)}=(\delta_{n,k}e_n)_{n=1}^\infty $, where $(e_n)_{n=1}^\infty $ is an  orthonormal basis of the closed subspace $W_1$. Then for every $k\in\N$ we have from \eqref{Tcis0again} that
            \begin{align*}
                0=c_ke_k-\sum_{n=1}^\infty c_n\langle\widetilde\phi_n,\phi_k\rangle e_k=c_ke_k-\left\langle\sum_{n=1}^\infty c_n\widetilde\phi_n,\phi_k\right\rangle e_k\stackrel{\eqref{Tcis0}}{=}c_ke_k.
            \end{align*}
            That is, $c_k=0$ for all $k\in\N$, which gives us the desired conclusion.

            \item Assume every oblique dual frame $\Phi^\text{od}$ of $\Phi$ on $W_1$ is also an oblique dual frame of $G$ on $W_1$, that is, $T_G T^\ast_{\Phi^\textbf{od}}=\pi\und{W_2,W_1^\perp}=T_\Phi T^\ast_{\Phi^\textbf{od}}$ for all $\Phi^{\text{od}}\in\mathcal{P}(\Phi).$ From (i), we conclude that $T_G=T_\Phi$, from which it immediately follows that $G=\Phi.$
        \end{enumerate}
    \end{proof}

    \begin{rem}\label{framecharacterizedbyitssetofODFver2}
        Let $\Hs$ be a separable Hilbert space and let $W_1$ and $W_2$ be closed subspaces of $\Hs.$ As noted already in Lemma \ref{propoblproj}, the decomposition $\Hs=W_1\oplus W_2^\perp$ is equivalent to the decomposition $\Hs=W_2\oplus W_1^\perp$, and thus, one gets a ``symmetric'' version of Proposition \ref{framecharacterizedbyitssetofODF} by interchanging the roles of the subspaces $W_1$ and $W_2$.
    \end{rem}


    \section{Oblique Pseudoinverse of Multipliers}\label{OblPsinvofMultSec}
   
    In this section, our goal is to extend Theorem \ref{InvofMultasMultorig} to possibly non-invertible multipliers, and express their oblique pseudoinverse as another multiplier of a similar desired structure. We first fix our setting.

    \begin{assu}\label{assumponmult}
        Let $\Psi$ and $\Phi$ be Bessel sequences in $\Hs_1$ and $\Hs_2,$ respectively, and consider a bounded scalar sequence $m$. Then the multiplier $\mult:\Hs_1\to\Hs_2$ is a well-defined bounded operator and the convergence in \eqref{multdef} is unconditional (by \cite[Theorem 6.1]{BalazsBasicDefinitionandPropertiesofBesselMultipliers}). Assume further that $\ran(\mult)$ is closed. Let $W\subseteq\Hs_1$ and $V\subseteq\Hs_2$ be closed subspaces that satisfy \eqref{H1space} and \eqref{H2space}, respectively. Put $M:=\mult$ and consider the oblique pseudoinverse $\psinv$ of $M$ on $W$ along $V.$
    \end{assu}

    \begin{rem}\label{multiplierwithnonclosedrange}
        In order to consider the oblique pseudoinverse of $\mult$, {we have to assume that $\mult$ has closed range since it is not guaranteed from the assumptions alone on the sequences $\Psi,\Phi,$ and $m$ in Assumption \ref{assumponmult}}. As an example, consider $\Hs=\ell^2(\N)$, $m=(1/n)$, and $\Psi=\Phi=(\delta_n)$, where $(\delta_n)_{n=1}^\infty $ is the canonical basis for $\ell^2(\N)$. With these, we obtain a Bessel multiplier $\mult$ with bounded symbol $m$, but with non-closed range. Indeed, for $x=(x_n)_{n=1}^\infty \in\Hs$, 
        \[ M_{m,\Phi,\Psi} x=\left(\frac{x_n}{n}\right)_{n=1}^\infty . \]
        Now for each $k\in\N$, define the sequence $x^{(k)}\in\Hs$ via
        \[x^{(k)}_n = \begin{cases}
            1, &\text{if }n\le k
            \\0, &\text{if } n>k
        \end{cases}.\]
        Then $Mx^k$ converges to $\left(\frac1n\right)_{n=1}^\infty $ in $\Hs$, but $\left(\frac1n\right)_{n=1}^\infty \not\in\ran(M)$.

        In Appendix \ref{closednessofranM}, we will discuss more classes of multipliers that have closed or non-closed range.
    \end{rem} 
    
      The first task in our extension of Theorem \ref{InvofMultasMultorig} is to find  appropriate sequences that would replace the sequences $\Psi^\dagger$ and $\Phi^\dagger$. These special frames were determined {in Theorem \ref{InvofMultasMultorig}} as follows:
    \begin{itemize}
        \item $\Psi^\dagger=(\mult^{-1}(m_n\phi_n))_{n=1}^\infty $,
        \item $\Phi^\dagger=((\mult^{-1})^\ast(\overline{m_n}\psi_n))_{n=1}^\infty $.
    \end{itemize}
    As we aim to find an expression for the oblique pseudoinverse, the natural thing to do is to replace the inverse of $M$ with an appropriate oblique pseudoinverse, and consider instead 

    \begin{itemize}
        \itemequation[psisharp]{}{\Psi^\#\und{W,V}:=(\psinv(m_n\phi_n))_{n=1}^\infty } 
        \itemequation[phisharp]{}{\Phi^\#\und{W,V}:=((\psinv)^\ast(\overline{m_n}\psi_n))_{n=1}^\infty }
    \end{itemize}
        
    However, being in a more general setting than the one in \cite{BalazsStoevaRepresentationofhtInverseofaFrameMultiplier}, we face several hurdles along the way.

    {First}, the new sequences $\Psi^\#\und{W,V}$ and $\Phi^\#\und{W,V}$ are no longer guaranteed to be frames for the mother spaces. What we can only see from initial inspection is
    \[ \Psi\und{W,V}^\#\subseteq\ran(\psinv)=W \quad\text{and}\quad \Phi^\#\und{W,V}\subseteq\ran((\psinv)^\ast)=(\Ker(\psinv))^\perp=V^\perp.\]
    {Second}, while in Theorem \ref{InvofMultasMultorig}, $\Psi^\dagger$ and $\Phi^\dagger$ are dual frames of $\Psi$ and $\Phi$, respectively, we can immediately say that the same relationship might fail to hold in our setting with $\Psi^\#\und{W,V}$ and $\Phi^\#\und{W,V}$. Indeed, under Assumption \ref{assumponmult},  $\Psi$ and $\Phi$ are not even assumed to be frames (and even if they were assumed to be frames, we go back to the first problem -- the frame property of $\Psi^\#\und{W,V}$ and $\Phi^\#\und{W,V}$). Not assuming the frame properties of $\Psi$ and $\Phi$ gives us our third obstacle: if we use oblique duality instead of the classical duality (as we have been hinting from the beginning), we have to replace $\Psi$ and $\Phi$ with proper frames that could be oblique dual frames of $\Psi^\#\und{W,V}$ and $\Phi^\#\und{W,V}$.

    From now on, under Assumption 2,  $\Psi^\#_{W,V}$ and $\Phi^\#_{W,V}$ are always given as in \eqref{psisharp} and \eqref{phisharp}, respectively. For simplicity, we use $\Psi^\#:=\Psi^\#\und{W,V}$ and $\Phi^\#:=\Phi^\#\und{W,V}$, if $V$ and $W$ are clear in the context.
    
    We face our first challenge and show that while $\Psi^\#\und{W,V}$ and $\Phi^\#\und{W,V}$ are not necessarily frames for $\Hs_1$ and $\Hs_2,$ respectively, they turn out to be frames for $W$ and $V^\perp$, respectively.

    \begin{thm}\label{PsisharpandPhisharpareframes}
        Suppose Assumption \ref{assumponmult} holds. Then the following statements are true.
        \begin{enumerate}[(i)]
            \item $\Psi^\#_{W,V}$ is a frame for $W$.
            \item $\Phi^\#_{W,V}$ is a frame for $V^\perp.$
        \end{enumerate}
    \end{thm}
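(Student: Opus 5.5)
The plan is to show directly, for each sequence, that it is Bessel and that its synthesis operator maps $\ell^2(\N)$ \emph{onto} the target subspace. This suffices by the standard fact that a Bessel sequence whose elements lie in a closed subspace $U$ is a frame for $U$ exactly when its synthesis operator has range $U$. Indeed, if $\ran(T)=U$ is closed, then $T^\ast|_U$ is bounded below, which gives the lower frame bound on $U$. This is the same mechanism as in Lemma \ref{dualityrelationshipmeansframes}. By Proposition \ref{propertiesofoblpsinvs}(i), $\psinv$ is bounded, and everything reduces to operator identities.

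For (i), observe that for every finitely supported $c\in\ell^2(\N)$ we have $\sum_n c_n \psinv(m_n\phi_n)=\psinv T_\Phi\mathcal M_m c$. Since $T_\Phi$, $\mathcal M_m$ (as $m$ is bounded) and $\psinv$ are bounded, the map $T_{\Psi^\#}:=\psinv T_\Phi\mathcal M_m$ is a bounded operator from $\ell^2(\N)$ into $\Hs_1$ that acts as synthesis with $\Psi^\#$. Hence $\Psi^\#$ is a Bessel sequence, and $T_{\Psi^\#}$ is its synthesis operator. Clearly $\ran(T_{\Psi^\#})\subseteq\ran(\psinv)=W$. For the reverse inclusion, note that
\[
T_{\Psi^\#}T^\ast_\Psi=\psinv T_\Phi\mathcal M_mT^\ast_\Psi=\psinv M=\pi\und{W,\Ker(M)}
\]
by \eqref{ops cond2}, whose range is $W$. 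Thus $W\subseteq \ran(T_{\Psi^\#})$, so $\ran(T_{\Psi^\#})=W$, and $\Psi^\#$ is a frame for $W$.

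For (ii), I would argue in the same way with adjoints. Here $T_{\Phi^\#}=(\psinv)^\ast T_\Psi\mathcal M_{\overline m}$ is bounded, so $\Phi^\#$ is Bessel. Its range lies in $\ran((\psinv)^\ast)\subseteq \Ker(\psinv)^\perp=V^\perp$, using that $\Ker(\psinv)=V$ by Definition \ref{oblpsinvmydef}. Next, since $\mathcal M_m^\ast=\mathcal M_{\overline m}$, we get
\[
T_{\Phi^\#}T^\ast_\Phi=(\psinv)^\ast T_\Psi\mathcal M_{\overline m}T_\Phi^\ast=(\psinv)^\ast M^\ast=(M\psinv)^\ast=\pi\und{\ran(M),V}^\ast=\pi\und{V^\perp,\ran(M)^\perp},
\]
using \eqref{ops cond1} and Lemma \ref{propoblproj}(iii). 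The range of this operator is $V^\perp$, so $\ran(T_{\Phi^\#})=V^\perp$, and $\Phi^\#$ is a frame for $V^\perp$.

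The only mildly delicate point is identifying the synthesis operator of $\Psi^\#$ with $\psinv T_\Phi\mathcal M_m$, which means checking convergence of the defining series. This follows from boundedness and unconditional convergence of $\sum_n c_nm_n\phi_n$, since applying a bounded operator preserves it. The rest consists of the two range computations above.
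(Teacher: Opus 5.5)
Your proposal is correct and is essentially the paper's proof. You identify $T_{\Psi^\#}=\psinv T\und{m\Phi}$ and $T_{\Phi^\#}=(\psinv)^\ast T\und{\overline m\Psi}$, and you get the reverse range inclusions from $T_{\Psi^\#}T^\ast_\Psi=\psinv M=\pi\und{W,\Ker(M)}$ and $T_{\Phi^\#}T^\ast_\Phi=(M\psinv)^\ast=\pi^\ast\und{\ran(M),V}$, just as the paper does. The only cosmetic differences are that you argue surjectivity through the ranges of these projections rather than by writing a given $h\in W$ (resp.\ $h\in V^\perp$) as an image, and that you use only the inclusion $\ran((\psinv)^\ast)\subseteq\Ker(\psinv)^\perp=V^\perp$, which is all that is needed.
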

    \begin{proof}
        \begin{enumerate}[(i)]
            \item  {Under Assumption \ref{assumponmult}, $m\Phi$ is a Bessel sequence in $\Hs_2$ and since $\psinv$ is bounded, $\Psi^\#$ is a Bessel sequence in $\Hs_1$. Thus, $T\und{\Psi^\#}$ is well defined and bounded on $\ell^2(\N)$ and furthermore, we have $T_{\Psi^\#}=\psinv T\und{m\Phi}$. It is thus clear that $\ran(T_{\Psi^\#})\subseteq W$. We now show that $\ran(T\und{\Psi^\#})=W$. Let $h\in W$. Then
            \begin{align*}
                h = \psinv M h=\psinv T_{m\Phi}T^*_\Psi h,
            \end{align*}
            where $T^*_\Psi h\in\ell^2(\N)$ and so $h\in\ran(\psinv T_{m\Phi})=\ran(T_{\Psi^\#}).$ Therefore, $\ran(T_{\Psi^\#})=W$, which completes the proof that $\Psi^\#$ is a frame for $W.$}

            \item
            {Similar to above, $\overline{m}\Psi$ is a Bessel sequence in $\Hs_1$, $T\und{\Phi^\#}$ is well defined and bounded on $\ell^2(\N)$, and $
            T\und{\Phi^\#}=(\psinv)^*T\und{\overline{m}\Psi}$.
            Since it is now clear that $\ran(T\und{\Phi^\#})\subseteq V^\perp$, we just need to show the converse inclusion. Using
            Lemma \ref{propoblproj} and the identity \eqref{ops cond1}, for every $h\in V^\perp$ we can write 
            $h=(\pi\und{\ran(M),V})^*h = (M\psinv)^*h=(\psinv)^* T\und{\overline{m}\Psi}T^*_\Phi h= T_{\Phi^\#}T^*_\Phi h.$
            Therefore, $V^\perp\subseteq \ran( T\und{\Phi^\#}  )$.}
        \end{enumerate}
    \end{proof}

    We solve the other two hurdles in the next result. The challenge is to relate $\Psi^\#$ and $\Phi^\#$ with $\Psi$ and $\Phi$, respectively. We surmise that they would satisfy some {oblique} dual relationship. Since decompositions \eqref{H1space} and \eqref{H2space} hold, and using the above theorem, our objective is to find  frames for $\ker(M)^\perp$ and $\ran(M)$ that would be oblique dual frames of $\Psi^\#$ and $\Phi^\#$, respectively. The first sequences that come to mind are $\Psi$ and $\Phi$, but this would be careless as they are not necessarily frames {for $\Ker(M)^\perp$ and $\ran(M)$, and not even necessarily contained in $\Ker(M)^\perp$ and $\ran(M)$, respectively}. As a remedy, we restrict $\Psi$ and $\Phi$ using appropriate oblique pseudoinverses and we obtain the following result.
    \begin{thm}\label{partnersofPhiandPsisharps}
        Suppose Assumption \ref{assumponmult} holds. Then the following hold.
        \begin{enumerate}[(i)]
            \item $\pi\und{W,\Ker(M)}^*\Psi$ is a frame for $\Ker(M)^\perp$.
            \item $\pi\und{\ran(M),V}\Phi$ is a frame for $\ran(M)$.
            \item $\pi^*\und{W,\Ker(M)}\Psi$ and $\Psi^\#\und{W,V}$ are a pair of oblique dual frames.
            \item $\pi\und{\ran(M),V}\Phi$ and $\Phi^\#\und{W,V}$ are a pair of oblique dual frames.
        \end{enumerate}
    \end{thm}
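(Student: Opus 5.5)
The plan is to prove the two oblique duality identities (iii) and (iv) directly, and then read off (i) and (ii) from Lemma \ref{dualityrelationshipmeansframes} together with Theorem \ref{PsisharpandPhisharpareframes}.

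First, the subspace decompositions. By \eqref{H1space} and Lemma \ref{propoblproj}(iii), $\Hs_1=\Ker(M)^\perp\oplus W^\perp$, which is the same as $\Hs_1=W\oplus(\Ker(M)^\perp)^\perp$. Likewise, \eqref{H2space} gives $\Hs_2=\ran(M)\oplus (V^\perp)^\perp$. So the pair $(W,\Ker(M)^\perp)$ and the pair $(\ran(M),V^\perp)$ are admissible settings for oblique duality.

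Next, set $\Psi':=\pi^*\und{W,\Ker(M)}\Psi$ and $\Phi':=\pi\und{\ran(M),V}\Phi$.
\begin{itemize}
\item Since $\pi^*\und{W,\Ker(M)}=\pi\und{\Ker(M)^\perp,W^\perp}$ has range $\Ker(M)^\perp$ and is bounded, $\Psi'$ is a Bessel sequence in $\Ker(M)^\perp$.
\item Its synthesis operator is $T_{\Psi'}=\pi^*\und{W,\Ker(M)}T_\Psi$, so $T^*_{\Psi'}=T^*_\Psi\pi\und{W,\Ker(M)}$.
\item Similarly, $\Phi'$ is a Bessel sequence in $\ran(M)$ with $T_{\Phi'}=\pi\und{\ran(M),V}T_\Phi$.
\end{itemize}

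For (iii), compute $T_{\Psi^\#}T^*_{\Psi'}$, using $T_{\Psi^\#}=\psinv T_{m\Phi}$ from the proof of Theorem \ref{PsisharpandPhisharpareframes}. This gives
\[T_{\Psi^\#}T^*_{\Psi'}=\psinv T_{m\Phi}T^*_\Psi\,\pi\und{W,\Ker(M)}=\psinv M\pi\und{W,\Ker(M)}=\pi\und{W,\Ker(M)}\pi\und{W,\Ker(M)}=\pi\und{W,\Ker(M)},\]
by \eqref{ops cond2}. Here $\pi\und{W,\Ker(M)}$ is exactly the oblique projection onto $W$ along $(\Ker(M)^\perp)^\perp$. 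Since $\Psi^\#$ is Bessel in $W$ and $\Psi'$ is Bessel in $\Ker(M)^\perp$, Lemma \ref{dualityrelationshipmeansframes} shows that both are frames for their subspaces and form an oblique dual pair. This proves (i) and (iii).

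For (iv), I take adjoints to stay within Bessel sequences. Using $T_{\Phi^\#}=(\psinv)^*T_{\overline m\Psi}$, we have $T_{\Phi'}T^*_{\Phi^\#}=\pi\und{\ran(M),V}T_\Phi T_{\overline m \Psi}^*\psinv$. The key observation is $T_\Phi T^*_{\overline m\Psi}=T_\Phi\mathcal M_m T^*_\Psi=M$. Hence
\[T_{\Phi'}T^*_{\Phi^\#}=\pi\und{\ran(M),V}M\psinv=\pi\und{\ran(M),V}\pi\und{\ran(M),V}=\pi\und{\ran(M),V},\]
by \eqref{ops cond1}. This is the oblique projection onto $\ran(M)$ along $(V^\perp)^\perp$. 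Applying Lemma \ref{dualityrelationshipmeansframes} with $W_1=\ran(M)$ and $W_2=V^\perp$ then gives (ii) and (iv).

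The main obstacle I anticipate is bookkeeping rather than depth. One point is correctly identifying $T^*_{\overline m\Psi}=\mathcal M_m T^*_\Psi$; this works because $\langle h,\overline{m_n}\psi_n\rangle=m_n\langle h,\psi_n\rangle$. The other is matching each decomposition to the form $\Hs=W_1\oplus W_2^\perp$ required in the definition of oblique duals, so that the orientation of the projections agrees with the definition.
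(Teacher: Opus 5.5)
Your proposal is correct. Your computations of $T_{\Psi^\#}T^*_{\Psi'}$ and $T_{\Phi'}T^*_{\Phi^\#}$ are the same as the paper's; you work at the operator level, the paper termwise.

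The genuine difference is the logical order for (i) and (ii). The paper proves them first and directly, by showing that the synthesis operators are onto:
\begin{itemize}
\item for $h\in\Ker(M)^\perp=\ran(M^*)$ it writes $h=\pi^*_{W,\Ker(M)}M^*h_2=T_{\Psi'}(T^*_{m\Phi}h_2)$;
\item for $h\in\ran(M)$ it writes $h=\pi_{\ran(M),V}Mh_1=T_{\Phi'}(T^*_{\overline m\Psi}h_1)$.
\end{itemize}
It then uses (i), (ii) and Theorem \ref{PsisharpandPhisharpareframes} to know that all four sequences are frames, and only afterwards checks the duality identities.

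You instead verify the identities for Bessel sequences and let Lemma \ref{dualityrelationshipmeansframes} supply every frame property at once. This is more economical in two ways. It avoids the identification $\Ker(M)^\perp=\ran(M^*)$, which silently uses the closed range theorem for $M^*$. It also re-proves Theorem \ref{PsisharpandPhisharpareframes} as a byproduct: you only need $\Psi^\#$ and $\Phi^\#$ to be Bessel with $\Psi^\#\subseteq W$ and $\Phi^\#\subseteq V^\perp$, which follows from boundedness of $\psinv$ and $\ker(\psinv)=V$.

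The paper's ordering has its own advantage. It shows that (i) and (ii) are intrinsic properties of the projected Bessel sequences, holding independently of $\psinv$ and its defining identities \eqref{ops cond1}--\eqref{ops cond2}.

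One minor point: the detour through Lemma \ref{propoblproj}(iii) for the decompositions is unnecessary. Since $\Ker(M)$ and $V$ are closed, $\Hs_1=W\oplus(\Ker(M)^\perp)^\perp$ and $\Hs_2=\ran(M)\oplus(V^\perp)^\perp$ are literally \eqref{H1space} and \eqref{H2space}.
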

    \begin{proof} For simplicity, write $\Psi^K:=\pi^*\und{W,\Ker(M)}\Psi$ and $\Phi^R:=\pi\und{\ran(M),V}\Phi$. 
        \begin{enumerate}[(i)]
            \item Clearly, $\pi^*\und{W,\Ker(M)}\Psi=\pi\und{\Ker(M)^\perp,W^\perp}\Psi\subseteq\Ker(M)^\perp.$ Since $\Psi$ is Bessel in $\Hs_1$ and $\pi^*\und{W,\Ker(M)}$ is bounded on $\Hs_1$, we have that $\pi^*\und{W,\Ker(M)}\Psi$ is a Bessel sequence in $\Ker(M)^\perp.$  Then $T_{\Psi^K}$ is well-defined and bounded with $\ran(T_{\Psi^K})\subseteq\Ker(M)^\perp.$ We will prove the reverse inclusion to complete the proof. Let $h\in\Ker(M)^\perp=\ran(M^*)$. Then there exists $h_2\in\Hs_2$ such that $h=M^*h_2.$ Moreover, we have $h=\pi^*\und{W,\ker(M)}h$. Therefore,
            \begin{align*}
                h=\pi^*\und{W,\Ker(M)}M^*h_2 = \pi^*\und{W,\Ker(M)}T_\Psi T_{m\Phi}^*h_2 = T_{\Psi^K}(T^*_{m\Phi}h_2),
            \end{align*}
            which is exactly what we want.

            \item As {above}, $T_{\Phi^R}$ is well-defined and bounded on $\ell^2(\N)$ with $\ran(T_{\Phi^R})\subseteq\ran(M).$ We show the converse inclusion to complete the proof. If $h\in\ran(M)$, then $h=\pi\und{\ran(M),V}h$ and $h=Mh_1$ for some $h_1\in\Hs_1.$ Thus,
            \begin{align*}
                h=\pi\und{\ran(M),V}Mh_1=\pi\und{\ran(M),V} T_\Phi T^*_{\overline m\Psi}h_1 = T_{\Phi^R} (T^*_{\overline m\Psi}h_1).
            \end{align*}

            \item From (i) and Theorem \ref{PsisharpandPhisharpareframes}(i), it is left to show that $T_{\Psi^\#}T^*_{\Psi^K}=\pi\und{W,\Ker(M)}.$ Indeed, for any $h\in\Hs_1$ we have
            \begin{align*}
                T_{\Psi^\#}T^*_{\Psi^K}h &=\sum_{n=1}^\infty \langle h, \pi^*\und{W,\Ker(M)}\psi_n\rangle \psinv(m_n\phi_n)
                = \psinv \left( \sum_{n=1}^\infty  m_n\langle \pi\und{W,\Ker(M)} h, \psi_n\rangle \phi_n\right)
                \\&= \psinv M(\pi\und{W,\Ker(M)}h)
                = \pi\und{W,\Ker(M)}h.
            \end{align*}

            \item From (ii) and Theorem \ref{PsisharpandPhisharpareframes}(ii), it remains to show that $T_{\Phi^R}T^*_{\Phi^\#}=\pi\und{\ran(M),V}.$  Indeed, for any $h\in\Hs_2$, we obtain
            \begin{align*}
                T_{\Phi^R}T^*_{\Phi^\#} h &= \sum_{n=1}^\infty  \langle h, (\psinv)^*(\overline{m_n}\psi_n)\rangle \pi\und{\ran(M),V}\phi_n
                = \pi\und{\ran(M),V}M\psinv h
                = \pi\und{\ran(M),V}h.
            \end{align*}
        \end{enumerate}
    \end{proof}

   We now have all the necessary ingredients to extend Theorem \ref{InvofMultasMultorig} to possibly noninvertible multipliers considering oblique pseudoinverses. 
    \begin{thm}\label{oblpsinvofmultprop}
        Suppose Assumption \ref{assumponmult} holds and suppose in addition that $m$ is semi-normalized.  Then the following hold.
        \begin{enumerate}[(i)]
            \item $\psinv=M_{\tfrac{1}{m},\Psi\und{W,V}^\#,\Phi^{\text{od}}}$ for any oblique dual frame $\Phi^\text{od}$ of $\pi\und{\ran(M),V}\Phi$ on $V^\perp$.
            \item $\psinv=M_{\tfrac{1}{m},\Psi^{\text{od}},\Phi^{\#}\und{W,V}}$ for any oblique dual frame $\Psi^\text{od}$ of $\pi^\ast\und{W,\Ker(M)}\Psi$ on $W$.
            \item $\psinv=M_{\tfrac{1}{m},\Psi\und{W,V}^\#,\Phi\und{W,V}^\#}$.
           \end{enumerate}
    \end{thm}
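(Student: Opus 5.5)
The plan is to work entirely at the level of synthesis and analysis operators, using the factorizations $T_{\Psi^\#}=\psinv T_{m\Phi}$ and $T_{\Phi^\#}=(\psinv)^*T_{\overline m\Psi}$ from the proof of Theorem \ref{PsisharpandPhisharpareframes}, together with the oblique duality relations of Theorem \ref{partnersofPhiandPsisharps}. Write $\Psi^K:=\pi^*\und{W,\Ker(M)}\Psi$ and $\Phi^R:=\pi\und{\ran(M),V}\Phi$. Since $m$ is semi-normalized, $1/m$ is bounded. Hence each multiplier in (i)--(iii) is a well-defined bounded Bessel multiplier, because all sequences involved are frames for subspaces and therefore Bessel in the whole space by Lemma \ref{BesselinsubspacemeansBesselinspace}. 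The key observation is that multiplying the symbol by $1/m$ exactly cancels the factors $m_n$ built into $\Psi^\#$ and $\Phi^\#$: we have $\mathcal M_{1/m}T^*_{m\Phi}=T^*_{\Phi}$ up to the appropriate conjugation, and likewise for $\overline m\Psi$.

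For (i), I would fix an oblique dual $\Phi^{\text{od}}$ of $\Phi^R$ on $V^\perp$. By the symmetry of oblique duality, and since $\Hs_2=\ran(M)\oplus V$, the pair satisfies $T_{\Phi^R}T^*_{\Phi^{\text{od}}}=\pi\und{\ran(M),V}$. Then for $y\in\Hs_2$,
\[
M_{1/m,\Psi^\#,\Phi^{\text{od}}}y=\sum_n \tfrac1{m_n}\langle y,\phi^{\text{od}}_n\rangle\psinv(m_n\phi_n)=\psinv T_\Phi T^*_{\Phi^{\text{od}}}y .
\]
Next I would use $\psinv=\psinv\pi\und{\ran(M),V}$, which holds because $\psinv$ vanishes on $V$, and $\pi\und{\ran(M),V}T_\Phi=T_{\Phi^R}$. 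These give $\psinv T_{\Phi^R}T^*_{\Phi^{\text{od}}}y=\psinv\pi\und{\ran(M),V}y=\psinv y$.

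For (ii), I would fix an oblique dual $\Psi^{\text{od}}$ of $\Psi^K$ on $W$, so that $T_{\Psi^{\text{od}}}T^*_{\Psi^K}=\pi\und{W,\Ker(M)}$, equivalently $T_{\Psi^K}T^*_{\Psi^{\text{od}}}=\pi\und{W,\Ker(M)}^*$. Then
\[
M_{1/m,\Psi^{\text{od}},\Phi^\#}y=\sum_n \tfrac1{m_n}\langle y,(\psinv)^*\overline{m_n}\psi_n\rangle\psi^{\text{od}}_n=T_{\Psi^{\text{od}}}T^*_\Psi\psinv y .
\]
Since $\psinv y\in W$, we get $T^*_\Psi\psinv y=T^*_\Psi\pi\und{W,\Ker(M)}\psinv y=T^*_{\Psi^K}\psinv y$. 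The expression therefore equals $\pi\und{W,\Ker(M)}\psinv y=\psinv y$.

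Part (iii) then follows from (ii) with $\Psi^{\text{od}}=\Psi^\#$, which is admissible by Theorem \ref{partnersofPhiandPsisharps}(iii). Alternatively it follows from (i) with $\Phi^{\text{od}}=\Phi^\#$, admissible by Theorem \ref{partnersofPhiandPsisharps}(iv).

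The main obstacle I expect is bookkeeping of which subspace each oblique duality lives on, and which of the two adjoint forms of the duality relation is needed. In (i) one needs the form with $T_{\Phi^R}$ on the left. In (ii) the needed identity is the projection $\pi\und{W,\Ker(M)}$ applied to vectors already in $W$. This requires care with Lemma \ref{propoblproj}(iii) and the symmetry Remark, and with conjugations of the symbol when moving $1/m$ through inner products.
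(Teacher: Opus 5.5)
Your proposal is correct and is essentially the paper's proof. In (i) you pull $\psinv$ out of the series, and in (ii) you rewrite the coefficients as $\langle \psinv y,\psi_n\rangle$; in both cases you then insert the relevant oblique projection and apply the oblique duality relation with $\Phi^R$ (resp.\ $\Psi^K$). The differences are cosmetic: you justify inserting $\pi\und{\ran(M),V}$ and $\pi\und{W,\Ker(M)}$ via $\psinv|_V=0$ and $\ran(\psinv)=W$, where the paper writes $\psinv=\psinv M\psinv$ and uses \eqref{ops cond1}, \eqref{ops cond2}. Your derivation of (iii) from (ii) with Theorem \ref{partnersofPhiandPsisharps}(iii) is as valid as the paper's route via (i) and Theorem \ref{partnersofPhiandPsisharps}(iv).
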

    \begin{proof} 
    For convenience, as in the proof of Theorem \ref{partnersofPhiandPsisharps}, we denote $\Psi^K:=\pi^\ast\und{W,\Ker(M)}\Psi$ and $\Phi^R:=\pi\und{\ran(M),V}\Phi$. From Theorem \ref{partnersofPhiandPsisharps}, it is shown that $\Psi^K$ and $\Phi^R$ are frames for $\Ker(M)^\perp$ and $\ran(M)$, respectively.
        \begin{enumerate}[(i)]
            \item Let $h\in\Hs_2$ and let $\Phi^\text{od}$ be any oblique dual frame of $\Phi^R$ on $V^\perp$. Then
            \begin{align*}
                M_{\tfrac{1}{m},\Psi^\#,\Phi^{\text{od}}}h &= \sum_{n=1}^\infty  \frac{1}{m_n}\langle h, (\Phi^{\text{od}})_n\rangle \psinv (m_n\phi_n)
                = \psinv \left(\sum_{n=1}^\infty  \langle h, (\Phi^{\text{od}})_n\rangle \phi_n \right).
            \end{align*}
            {Using Proposition \ref{propertiesofoblpsinvs}(ii) and property \eqref{ops cond1}}, we have 
            \begin{align*}
                 \psinv \left(\sum_{n=1}^\infty  \langle h, (\Phi^{\text{od}})_n\rangle \phi_n \right)&=\psinv M \psinv \left(\sum_{n=1}^\infty  \langle h, (\Phi^{\text{od}})_n\rangle \phi_n \right)
                \\&= \psinv \left(\sum_{n=1}^\infty  \langle h, (\Phi^{\text{od}})_n\rangle \pi\und{\ran(M),V}\phi_n \right) 
                =\psinv T\und{\Phi^R}T^\ast\und{\Phi^\text{od}}h
                \\&=\psinv \pi\und{\ran(M),V}h
                = \psinv M\psinv h
                =\psinv h.
            \end{align*}

            \item Let $h\in\Hs_2$ and let $\Psi^\text{od}$ be any oblique dual frame of $\Psi^K$ on $W$. {Using Proposition \ref{propertiesofoblpsinvs}(ii) and \eqref{ops cond2}, we get}
            \begin{align*}
                M_{\tfrac{1}{m},\Psi^{\text{od}},\Phi^{\#}} h &=\sum_{n=1}^\infty \frac{1}{m_n}\langle h, (\psinv)^*(\overline{m_n}\psi_n)\rangle (\Psi^\text{od})_n
                = \sum_{n=1}^\infty  \langle \psinv M\psinv h, \psi_n\rangle (\Psi^\text{od})_n
                \\&= \sum_{n=1}^\infty  \langle\psinv h, \pi\und{W,\Ker(M)}^*\psi_n\rangle (\Psi^\text{od})_n 
                = T\und{\Psi^\text{od}}T^*\und{\Psi^K}(\psinv h)
                = \pi\und{W,\Ker(M)}\psinv h
                = \psinv h.
            \end{align*}
            \item {Follows from (i) and Theorem \ref{partnersofPhiandPsisharps}(iv)}.    
        \end{enumerate}
    \end{proof}

    {We now illustrate the previous theorem in the next example.}

    \begin{ex}
         Let $(e_n)_{n=1}^\infty $ be an orthonormal basis of $\Hs$. Take $m=(1)$, $\Psi=(e_{n+2})_{n=1}^\infty $ and $\Phi=(e_{n+1})_{n=1}^\infty $. Clearly,
        $M:=\mult$ is not invertible on $\Hs$ and
        \begin{itemize}
            \item $\Ker(M)=\spann\{e_1,e_2\}$,
            \item $\ran(M)=\overline{\spann\{e_{n+1}\}_{n=1}^\infty }$.
        \end{itemize}
        With the choice of subspaces $W=\overline{\spann\{e_2+e_3,e_4,e_5,\dots\}}$ and $V=\spann\{e_1+e_2\}$, we satisfy $\Hs=W\oplus\Ker(M)=\ran(M)\oplus V$. The oblique pseudoinverse $\psinv$ of $M$ on $W$ along $V$ is well-defined and is given by
        \begin{equation}
            \psinv y= \langle y, e_2-e_1\rangle (e_2+e_3)+\sum_{n=4}^\infty \langle y, e_{n-1}\rangle e_{n}
        \end{equation}
        for all $y\in\Hs$. Furthermore, 
        \[ \psinv = M_{1/m,\Psi^\#,\Phi^{\text{od}}}, \]
        where $\Psi\und{W,V}^\#=(e_2+e_3,e_4,e_5,\dots)$ and $\Phi^{\text{od}}=(e_2-e_1,e_3,e_4,\dots)$ is an oblique dual of $\Phi^R=\Phi$ on $V^\perp$.
     \end{ex}


    \section{On the frames $\Phi^\#\und{W,V}$ and $\Psi^\#\und{W,V}$}\label{OntheframesPhisharpandPsisharpSec}

    Balazs and Stoeva gave in \cite[Proposition 3.1]{BalazsStoevaRepresentationofhtInverseofaFrameMultiplier} additional  properties of the determined special frames $\Psi^\dagger$ and $\Phi^\dagger$ in Theorem \ref{InvofMultasMultorig}, in particular, that they are unique {in some sense}. We also extend these properties to our setting and obtain the following result.

    \begin{prop}
        Suppose Assumption \ref{assumponmult} holds and suppose $m$ is a semi-normalized scalar sequence. Then the following statements hold. 
        \begin{enumerate}[(i)]
            \item If $F$ is a Bessel sequence in $V^\perp$ such that $\psinv=M_{\tfrac1m,\Psi^\#,F}$, then $F$ is an oblique dual frame of $\pi\und{\ran(M),V}\Phi$ on $V^\perp$. 
            \item If $G$ is a Bessel sequence in $W$ such that $\psinv=M_{\tfrac1m, G,\Phi^\#}$, then $G$ is an oblique dual frame of $\pi^\ast\und{W,\Ker(M)}\Psi$ on $W.$
            \item The sequence $\Psi^\#$ is the only Bessel sequence in $W$ satisfying Theorem \ref{oblpsinvofmultprop}(i). 
            \item The sequence $\Phi^\#$ is the only Bessel sequence in $V^\perp$ satisfying Theorem \ref{oblpsinvofmultprop}(ii).
        \end{enumerate}
    \end{prop}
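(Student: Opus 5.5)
For (i) and (ii) the plan is to rewrite the assumed multiplier identity as an operator identity and then apply Lemma \ref{dualityrelationshipmeansframes}. For (iii) and (iv) the plan is to use the density statement in Proposition \ref{framecharacterizedbyitssetofODF}(i). Throughout, write $\Psi^K:=\pi^*\und{W,\Ker(M)}\Psi$ and $\Phi^R:=\pi\und{\ran(M),V}\Phi$, as in the proof of Theorem \ref{partnersofPhiandPsisharps}.

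For (i), note that $T\und{\Psi^\#}=\psinv T\und{m\Phi}=\psinv T_\Phi\mathcal M_m$, as in the proof of Theorem \ref{PsisharpandPhisharpareframes}. Hence the hypothesis reads
\[
\psinv=T\und{\Psi^\#}\mathcal M_{1/m}T_F^*=\psinv T_\Phi T_F^*.
\]
Apply $M$ on the left and use \eqref{ops cond1}. This gives $\pi\und{\ran(M),V}=\pi\und{\ran(M),V}T_\Phi T_F^*=T\und{\Phi^R}T_F^*$. Now $\Phi^R$ is Bessel in $\ran(M)$, $F$ is Bessel in $V^\perp$, and $\Hs_2=\ran(M)\oplus (V^\perp)^\perp$. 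So Lemma \ref{dualityrelationshipmeansframes}, with $W_1=\ran(M)$ and $W_2=V^\perp$, shows that both are frames and that they form an oblique dual pair. Hence $F$ is an oblique dual frame of $\Phi^R$ on $V^\perp$.

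For (ii), use $T\und{\Phi^\#}=(\psinv)^*T\und{\overline m\Psi}$. This gives $T^*\und{\Phi^\#}=\mathcal M_{\overline{\overline m}}T_\Psi^*\psinv=\mathcal M_m T_\Psi^*\psinv$, so the hypothesis becomes $\psinv=T_G T_\Psi^*\psinv$. Multiply on the right by $M$ and use \eqref{ops cond2}:
\[
\pi\und{W,\Ker(M)}=T_GT_\Psi^*\pi\und{W,\Ker(M)}=T_GT^*\und{\Psi^K},
\]
since $T^*\und{\Psi^K}h=(\langle \pi\und{W,\Ker(M)}h,\psi_n\rangle)_n$. Lemma \ref{dualityrelationshipmeansframes}, with $W_1=W$, $W_2=\Ker(M)^\perp$ and $\Hs_1=W\oplus\Ker(M)$, then yields the conclusion.

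For (iii), let $G'$ be a Bessel sequence in $W$ with $\psinv=M_{1/m,G',\Phi^{\rm od}}$ for every oblique dual $\Phi^{\rm od}$ of $\Phi^R$ on $V^\perp$. Theorem \ref{oblpsinvofmultprop}(i) gives the same identity for $\Psi^\#$. Subtracting, we get $(T_{G'}-T\und{\Psi^\#})\mathcal M_{1/m}T^*\und{\Phi^{\rm od}}=0$ for all such $\Phi^{\rm od}$. Apply Proposition \ref{framecharacterizedbyitssetofODF}(i), in the form of Remark \ref{framecharacterizedbyitssetofODFver2}, with frame space $\ran(M)$ and dual space $V^\perp$. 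Its hypothesis $\Hs_2=V^\perp\oplus\ran(M)^\perp$ follows from $\Hs_2=\ran(M)\oplus V$ by Lemma \ref{propoblproj}(iii). Then the ranges of the $T^*\und{\Phi^{\rm od}}$ together span a dense subspace of $\ell^2$. The bounded operator $(T_{G'}-T\und{\Psi^\#})\mathcal M_{1/m}$ therefore vanishes. Since $m$ is semi-normalized, $\mathcal M_{1/m}$ is invertible, so $T_{G'}=T\und{\Psi^\#}$ and $G'=\Psi^\#$.

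For (iv), argue in the same way. From $T\und{\Psi^{\rm od}}\mathcal M_{1/m}(T^*_{G'}-T^*\und{\Phi^\#})=0$, take adjoints to get $(T_{G'}-T\und{\Phi^\#})\mathcal M_{1/\overline m}T^*\und{\Psi^{\rm od}}=0$ for every oblique dual $\Psi^{\rm od}$ of $\Psi^K$ on $W$. Then apply Proposition \ref{framecharacterizedbyitssetofODF}(i) with $W_1=W$ and $W_2=\Ker(M)^\perp$, which is legitimate because $\Hs_1=W\oplus\Ker(M)$.

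The main point requiring care is matching the subspace hypotheses of Proposition \ref{framecharacterizedbyitssetofODF} in the correct orientation, namely which space carries the frame and which the duals. This is settled via Lemma \ref{propoblproj}(iii) and Remark \ref{framecharacterizedbyitssetofODFver2}.
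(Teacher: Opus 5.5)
Your proposal is correct and follows the paper's proof essentially step for step. In (i) and (ii) you derive $T\und{\Phi^R}T_F^*=\pi\und{\ran(M),V}$ and $T_GT^*\und{\Psi^K}=\pi\und{W,\Ker(M)}$ from \eqref{ops cond1} and \eqref{ops cond2} and then invoke Lemma \ref{dualityrelationshipmeansframes}; in (iii) and (iv) you use the density statement of Proposition \ref{framecharacterizedbyitssetofODF}(i) together with the invertibility of $\mathcal M_{1/m}$, and your explicit orientation check via Lemma \ref{propoblproj}(iii) and the "dense span" phrasing are, if anything, slightly more careful than the paper's write-up.
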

    \begin{proof}
        \begin{enumerate}[(i)]
            \item  Let $F$ be a Bessel sequence in $V^\perp$ such that $\psinv=M_{\tfrac1m,\Psi^\#,F}$. Having in mind Lemma \ref{dualityrelationshipmeansframes}, it is enough to prove that $T\und{\Phi^R}T^*\und F=\pi\und{\ran(M),V}$, where $\Phi^R$ denotes $\pi\und{\ran(M),V}\Phi$. We have
            $$\begin{gathered}
                \pi\und{\ran(M),V}\stackrel{\text{\eqref{ops cond1}}}{=}M\psinv\stackrel{\text{hypothesis}}{=}MM_{\tfrac1m,\Psi^\#,F}=MT\und{\Psi^\#}T^*\und{\frac{1}{\overline m}F}
                \\\stackrel{\text{\eqref{psisharp}}}{=}M\psinv T\und{m\Phi}T^*\und{\frac{1}{\overline m}F}
                \stackrel{\eqref{ops cond1}}{=}\pi\und{\ran(M),V}T_\Phi T^*_F=T\und{\Phi^R}T^*\und F.
            \end{gathered}$$

            \item Let $G$ be a Bessel sequence in $W$ such that $\psinv=M\und{\frac{1}{m},G,\Phi^\#}$. From Lemma \ref{dualityrelationshipmeansframes}, it suffices to show that $T\und GT^*_{\Psi^K}=\pi\und{W,\Ker(M)}$, where $\Psi^K$ denotes $\pi^\ast\und{W,\Ker(M)}\Psi$. Now,
            \begin{equation*}
                \begin{gathered}
                    \pi\und{W,\Ker(M)}\stackrel{\eqref{ops cond2}}{=}\psinv M
                    \stackrel{\text{hypothesis}}{=}M\und{\tfrac1m,G,\Phi^\#}M
                    =T\und GT^*\und{\tfrac1{\overline m}\Phi^\#}M
                    \\\stackrel{\eqref{phisharp}}{=}T\und G T^*\und\Psi\psinv M
                    \stackrel{\eqref{ops cond2}}{=}T\und G T^*\und\Psi\pi\und{W,\Ker(M)}
                    =T\und G T^*\und{\Psi^K}.                \end{gathered}
            \end{equation*}

            \item The proof follows closely the idea of the proof of \cite[Proposition 3.1]{BalazsStoevaRepresentationofhtInverseofaFrameMultiplier}. Denote by $\mathcal{M}_{1/m}$ the multiplication operator defined by $\mathcal{M}_{1/m}((c_n)_{n=1}^\infty )=(c_n/m_n)_{n=1}^\infty $ for any $(c_n)_{n=1}^\infty \in\ell^2(\N).$ Suppose $G$ is a Bessel sequence in $W$ satisfying $\psinv=M_{\tfrac{1}{m},G,\Phi^{\text{od}}}$ for every oblique dual $\Phi^\text{od}$ of $\pi\und{\ran(M),V}\Phi$ on $V^\perp$. Then for any oblique dual frame $\Phi^\text{od}$ of $\pi\und{\ran(M),V}\Phi$ on $V^\perp$, we have 
            \begin{align*}
                T_{\Psi^\#}\mathcal{M}_{\frac{1}{m}}T^*_{\Phi^\text{od}}=M_{\frac{1}{m},\Psi^\#,\Phi^\text{od}}=M_{\frac{1}{m},G,\Phi^\text{od}}=T_{G}\mathcal{M}_{\frac{1}{m}}T^*_{\Phi^\text{od}}.
            \end{align*}
            Using Remark \ref{framecharacterizedbyitssetofODFver2} and the invertibility of $\mathcal{M}_\frac{1}{m}$, we get $T_{\Psi^\#}=T_{G}$ and so $\Psi^\#=G$, as desired.
            
            \item Suppose $F$ is a Bessel sequence in $V^\perp$ such that $\psinv=M\und{\frac1m,\Psi^\text{od},F}$ for every oblique dual $\Psi^\text{od}$ of $\pi\und{W,\Ker(M)}^*\Psi$ on $W.$ Then for any oblique dual frame $\Psi^\text{od}$ of $\pi\und{W,\Ker(M)}^*\Psi$ on $W$, we obtain
            \[ T\und{\Phi^\#}\mathcal{M}^*_\frac1mT^*\und{\Psi^\text{od}}=M^*\und{\frac1m,\Psi^\text{od},\Phi^\#}=M\und{\frac1m,\Psi^\text{od},F}^*=T_F\mathcal{M}^*_\frac1mT^*\und{\Psi^\text{od}}. \]
            Using Proposition \ref{framecharacterizedbyitssetofODF} and the invertibility of $\mathcal{M}^*\und{\frac1m}$, we get $T\und{\Phi^\#}=T_F$ and so $\Phi^\#=F.$    
        \end{enumerate}
    \end{proof}


    \section{Using the Canonical Oblique Duals} \label{UsingtheCanonicalOblDualsSec}

     The main motivation to express the inverse of a multiplier as another multiplier of special type comes from what was shown in \cite{BalazsBasicDefinitionandPropertiesofBesselMultipliers} about multipliers whose analysis and synthesis sequences are Riesz bases, namely, if $m$ is semi-normalized and $\Psi$ and $\Phi$ are Riesz bases, then the multiplier $\mult$ is invertible and $\mult^{-1}=M\und{1/m,\widetilde\Psi,\widetilde\Phi}$, where $\widetilde\Psi$ and $\widetilde\Phi$ are the canonical duals of $\Psi$ and $\Phi$, respectively.
     
    Considering invertible frame multipliers for overcomplete frames, this result was extended in \cite{BalazsStoevaRepresentationofhtInverseofaFrameMultiplier} (see Theorem \ref{InvofMultasMultorig}). In Theorem \ref{oblpsinvofmultprop}, we provide more general results considering the oblique pseudoinverse instead of the inverse. In this section, we provide sufficient conditions when $\Psi^\#\und{W,V}$ and $\Phi^\#\und{W,V}$ in the representation in Theorem \ref{oblpsinvofmultprop}(iii) will be the canonical oblique duals of some appropriate frames.

    Under Assumption 2, define $\Psi^K=(\Psi_n^K)_{n\in\N}$ and $\Phi^R=(\Phi^R_n)_{n\in\N}$ as in the proof of Theorem \ref{partnersofPhiandPsisharps}, where it was shown that they are frames for $\Ker(M)^\perp$ and $\ran(M)$, respectively. Their respective canonical oblique dual frames are ~$\widetilde\Psi^K:=\pi\und{W,\Ker(M)}S^\dagger\und{\Psi^K}\Psi^K$ and $\widetilde\Phi^R:=\pi^\ast \und{\ran(M),V}S^\dagger\und{\Phi^R}\Phi^R$. 

    \begin{prop}
        Suppose Assumption \ref{assumponmult} holds and suppose further that $m$ is semi-normalized. {If there exists a bounded operator $U:\Hs_2\to\Hs_1$ such that $U\phi^R_n=\overline{m_n}\psi^K_n$ for all $n\in\N$ \footnote{This condition is equivalent to the condition $\ran(T^*\und{\overline m\Psi^K})\subseteq\ran(T^*\und{\Phi^R})$, see \cite{BalanEquivalenceRelationsandDistancesbetweenHilbertframes}.}, 
        then $\psinv=M\und{(1/m), \widetilde\Psi^K,\widetilde\Phi^R}$.
        }
    \end{prop}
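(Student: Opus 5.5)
The plan is to avoid comparing frame elements directly. Instead I would show that the operator
$$X:=M\und{(1/m),\widetilde\Psi^K,\widetilde\Phi^R}=T\und{\widetilde\Psi^K}\,\mathcal M_{1/m}\,T^*\und{\widetilde\Phi^R}$$
satisfies $XM=\pi\und{W,\Ker(M)}$ and $X|_V=0$, and then conclude $X=\psinv$ by a short algebraic argument. Since $m$ is semi-normalized and all sequences involved are Bessel, $X$ is a bounded operator from $\Hs_2$ into $\Hs_1$.

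\textbf{Step 1: factorizations.} Write $\widetilde\Psi^K=\pi\und{W,\Ker(M)}S^\dagger\und{\Psi^K}\Psi^K$ and $\widetilde\Phi^R=\pi^*\und{\ran(M),V}S^\dagger\und{\Phi^R}\Phi^R$. This gives
$$T\und{\widetilde\Psi^K}=\pi\und{W,\Ker(M)}S^\dagger\und{\Psi^K}T\und{\Psi^K},\qquad T^*\und{\widetilde\Phi^R}=T^*\und{\Phi^R}S^\dagger\und{\Phi^R}\pi\und{\ran(M),V}.$$
Next I rewrite $M$ in terms of the restricted sequences. From $M=M\pi\und{W,\Ker(M)}$ and $M=\pi\und{\ran(M),V}M$, and using $T^*_\Psi\pi\und{W,\Ker(M)}=T^*\und{\Psi^K}$ and $\pi\und{\ran(M),V}T_\Phi=T\und{\Phi^R}$, I get
$$M=T\und{\Phi^R}\mathcal M_m T^*\und{\Psi^K}.$$

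\textbf{Step 2: $X$ vanishes on $V$.} For $h\in V$ we have $\pi\und{\ran(M),V}h=0$. By the second factorization in Step 1, $T^*\und{\widetilde\Phi^R}h=0$, and hence $Xh=0$.

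\textbf{Step 3: $XM=\pi\und{W,\Ker(M)}$.} This is the main step, and it is where the hypothesis on $U$ is used. Since $\pi\und{\ran(M),V}$ fixes $\ran(M)$, Step 1 gives
$$XM=\pi\und{W,\Ker(M)}S^\dagger\und{\Psi^K}T\und{\Psi^K}\,\mathcal M_{1/m}\,P\,\mathcal M_mT^*\und{\Psi^K},$$
where $P:=T^*\und{\Phi^R}S^\dagger\und{\Phi^R}T\und{\Phi^R}$ is the orthogonal projection of $\ell^2$ onto the closed subspace $\ran(T^*\und{\Phi^R})$. The existence of $U$ yields $\mathcal M_mT^*\und{\Psi^K}=T^*\und{\overline m\Psi^K}=T^*\und{\Phi^R}U^*$, so $\ran(\mathcal M_mT^*\und{\Psi^K})\subseteq\ran(T^*\und{\Phi^R})$; this is the range inclusion mentioned in the footnote. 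Therefore $P$ acts as the identity on $\ran(\mathcal M_mT^*\und{\Psi^K})$, and the middle factor collapses:
$$XM=\pi\und{W,\Ker(M)}S^\dagger\und{\Psi^K}S\und{\Psi^K}.$$
By Proposition \ref{ranandkerofpseudoinvofframeop}, $S^\dagger\und{\Psi^K}S\und{\Psi^K}=\pi\und{\Ker(M)^\perp}$. Lemma \ref{propoblproj}(ii), applied with $Z=\Ker(M)$, then gives $\pi\und{W,\Ker(M)}\pi\und{\Ker(M)^\perp}=\pi\und{W,\Ker(M)}$, as required. The only points to check carefully here are the conjugation conventions in $\overline m\Psi^K$ versus $\mathcal M_m$, and the identification of $P$ as an orthogonal projection; the latter follows from the standard properties of the pseudo-inverse.

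\textbf{Step 4: conclusion.} By Step 2, $X=X\pi\und{\ran(M),V}$. Combining \eqref{ops cond1}, Step 3 and \eqref{ops cond2},
$$X=X\pi\und{\ran(M),V}=XM\psinv=\pi\und{W,\Ker(M)}\psinv=\psinv,$$
where the last equality holds because $\ran(\psinv)=W$. Hence $\psinv=M\und{(1/m),\widetilde\Psi^K,\widetilde\Phi^R}$. This argument never needs a second range inclusion, such as $\ran(\mathcal M_{1/m}T^*\und{\Phi^R})\subseteq\ran(T^*\und{\Psi^K})$, because Step 2 makes the identity $MX=\pi\und{\ran(M),V}$ unnecessary.
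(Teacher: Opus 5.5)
Your proof is correct and is essentially the paper's argument written at the operator level. The paper evaluates $M_{(1/m),\widetilde\Psi^K,\widetilde\Phi^R}$ on $y=Mx_y+z_y$ and uses the same ingredients: the factor $\pi_{\ran(M),V}$ inside $T^*_{\widetilde\Phi^R}$ kills $z_y$; the range inclusion $\ran(T^*_{\overline m\Psi^K})\subseteq\ran(T^*_{\Phi^R})$ together with $S^\dagger_{\Phi^R}S_{\Phi^R}=\pi_{\ran(M)}$ collapses the middle factor; and $T_{\widetilde\Psi^K}T^*_{\Psi^K}=\pi_{W,\Ker(M)}$ recovers $x_y$. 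Your Steps 2--4 package this pointwise computation as $X=X\pi_{\ran(M),V}$ and $XM=\pi_{W,\Ker(M)}$, and you verify the canonical-oblique-dual identity explicitly via Proposition~\ref{ranandkerofpseudoinvofframeop} and Lemma~\ref{propoblproj}(ii).
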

    \begin{proof}
         Let $y\in\Hs_2$ and let $x_y\in W$ be as described in Lemma \ref{Lemmaforourpsinvdef}.
        Consider the multiplication operator $\mathcal{M}_{1/m}$, which clearly maps $\ell^2(\N)$ into $\ell^2(\N)$. Note that every frame operator is self-adjoint, and thus so is every pseudo-inverse of a frame operator. We have 
         \begin{align*}
            M\und{(1/m), \widetilde\Psi^K,\widetilde\Phi^R}y 
           & \stackrel{\text{defn. of $\widetilde\Phi^R$}}{=} T\und{\widetilde\Psi^K}\mathcal{M}_{1/m}T\und{\Phi^R}^\ast  S^\dagger\und{\Phi^R} \pi\und{\ran(M),V}y
            = T\und{\widetilde\Psi^K}\mathcal{M}_{1/m}T\und{\Phi^R}^\ast  S^\dagger\und{\Phi^R} \pi\und{\ran(M),V}Mx_y
            \\&=T\und{\widetilde\Psi^K}\mathcal{M}_{1/m}T\und{\Phi^R}^\ast S^\dagger\und{\Phi^R} \pi\und{\ran(M),V}T\und{\Phi}T^\ast\und{\overline m\Psi}x_y
             =T\und{\widetilde\Psi^K}\mathcal{M}_{1/m}T\und{\Phi^R}^\ast S^\dagger\und{\Phi^R} T\und{\Phi^R}T^\ast\und{\overline m\Psi}x_y
            \\&\stackrel{x_y\in W}{=}
            T\und{\widetilde\Psi^K}\mathcal{M}_{1/m}T\und{\Phi^R}^\ast S^\dagger\und{\Phi^R} T\und{\Phi^R}T^\ast\und{\overline m\Psi}\pi\und{W,\Ker(M)}x_y
            =T\und{\widetilde\Psi^K}\mathcal{M}_{1/m}T\und{\Phi^R}^\ast S^\dagger\und{\Phi^R} T\und{\Phi^R}T^\ast\und{\overline m\Psi^K}x_y.
        \end{align*}
        {From assumption (see the footnote), it follows  that  $\ran(T^\ast\und{\overline m\Psi^K})\subseteq\ran(T^\ast\und{\Phi^R})$}. Thus, there exists $y^x\in\Hs_2$ such that $T^\ast\und{\overline m\Psi^K}x_y=T^\ast\und{\Phi^R}y^x$. Applying Proposition \ref{ranandkerofpseudoinvofframeop}, we have ${\ran(S^\dagger\und{\Phi^R})}={\ran(M)}$, and so $S^\dagger\und{\Phi^R}S\und{\Phi^R}=\pi\und{\ran(S^\dagger\und{\Phi^R})}=\pi\und{\ran(M)}$. Thus, we have 
        \begin{align*}
            T\und{\widetilde\Psi^K}\mathcal{M}_{1/m}T\und{\Phi^R}^\ast  S^\dagger\und{\Phi^R} T\und{\Phi^R}T^\ast\und{\overline m\Psi^K}x_y
            &= T\und{\widetilde\Psi^K}\mathcal{M}_{1/m}T\und{\Phi^R}^\ast S^\dagger\und{\Phi^R} T\und{\Phi^R}T^\ast\und{\Phi^R}y^x
            {=} T\und{\widetilde\Psi^K}\mathcal{M}_{1/m}T\und{\Phi^R}^\ast  \pi\und{\ran(M)}y^x.
        \end{align*}
        Furthermore, using the fact that $\Phi^R$ is in $\ran(M)$, we get
        \begin{align*}
            T\und{\widetilde\Psi^K}\mathcal{M}_{1/m}T\und{\Phi^R}^\ast  \pi\und{\ran(M)}y^x
            &
            =T\und{\widetilde\Psi^K}\mathcal{M}_{1/m}( \pi\und{\ran(M)}T\und{\Phi^R})^\ast y^x
            =T\und{\widetilde\Psi^K}\mathcal{M}_{1/m}T\und{\Phi^R}^\ast y^x.
        \end{align*}
        Finally, using again $T^\ast\und{\overline m\Psi^K}x_y=T^\ast\und{\Phi^R}y^x$, 
         we obtain
        \begin{align*}
            T\und{\widetilde\Psi^K}\mathcal{M}_{1/ m}T\und{\Phi^R}^\ast  y^x
            =T\und{\widetilde\Psi^K}\mathcal{M}_{1/ m}T\und{\overline m\Psi^K}^\ast  x_y
            = T\und{\widetilde\Psi^K}T\und{\Psi^K}^\ast  x_y
            =x_y.
        \end{align*}
        Since $\psinv y=x_y$, the proof is complete.
    \end{proof}

    \appendix
    
    \section{On the closedness of the range of $\mult$} \label{closednessofranM}

As it is well known, every invertible operator 
has closed range. Unlike invertible multipliers, non-invertible multipliers need not always have closed range (see, for example, Remark \ref{multiplierwithnonclosedrange} and Example \ref{examplesofclosedandnonclosedrange}). 
In this section, we list different combinations of the analysis and synthesis sequences with respect to redundancy or non-redundancy, and investigate whether the resulting multiplier have closed or non-closed range. 
    \begin{prop}\label{whenisrangeclosedgivenoneRiesz}
        Let $\Phi$ be a Riesz sequence in $\Hs_2$ and let $m$ be bounded. Let $I:=\{n\in \N: m_n\neq0\}$ be nonempty.
        \begin{enumerate}[(i)]
            \item If $(\psi_n)_{n\in I}$ is a frame sequence in $\Hs_1$  and $(m_n)_{n\in I}$  is semi-normalized, then $\ran(\mult)$ and $\ran(M_{m,\Psi,\Phi})$ are  closed in $\Hs_1$ and $\Hs_2$, respectively. 
            \item If $\Psi$ is a Riesz sequence in $\Hs_1$, then $\ran(\mult)$ is closed in $\Hs_2$ if and only if $(m_n)_{n\in I}$ is  semi-normalized.
        \end{enumerate}   
    \end{prop}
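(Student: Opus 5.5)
The plan is to use the factorizations $M_{m,\Phi,\Psi}=T_\Phi\mathcal{M}_mT^*_\Psi$ and $M_{m,\Psi,\Phi}=T_\Psi\mathcal{M}_mT^*_\Phi$, together with two standard facts about Riesz sequences. Since $\Phi$ is a Riesz sequence, $T_\Phi:\ell^2(\N)\to\Hs_2$ is bounded below, i.e.\ an isomorphism onto the closed subspace $\overline{\spann}\{\phi_n\}$. Also $T^*_\Phi$ maps $\Hs_2$ onto $\ell^2(\N)$, as it is the adjoint of an injective operator with closed range. The basic transfer principle I will prove first is the following. If $T$ is bounded below and $A$ is bounded, then $\ran(TA)$ is closed if and only if $\ran(A)$ is closed. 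This holds because $T$ restricts to a homeomorphism of $\ell^2(\N)$ onto $\ran(T)$, and $\ran(T)$ is closed. Write $P_I$ for the coordinate projection onto $\ell^2(I)$. Note that $\mathcal{M}_m=\mathcal{M}_mP_I$, and that $\mathcal{M}_m$ maps $\ell^2(\N)$ into $\ell^2(I)$.

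For (i), first consider $M_{m,\Phi,\Psi}$. For $h\in\Hs_1$ we have $\mathcal{M}_mT^*_\Psi h=D\,T^*_{(\psi_n)_{n\in I}}h$, where $D:\ell^2(I)\to\ell^2(I)$ is multiplication by $(m_n)_{n\in I}$. Since $(m_n)_{n\in I}$ is semi-normalized, $D$ is invertible. Since $(\psi_n)_{n\in I}$ is a frame sequence, $T^*_{(\psi_n)_{n\in I}}$ has closed range. Hence $\ran(\mathcal{M}_mT^*_\Psi)=D\bigl(\ran(T^*_{(\psi_n)_{n\in I}})\bigr)$ is closed, and the transfer principle gives that $\ran(M_{m,\Phi,\Psi})$ is closed in $\Hs_2$.

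Next consider $M_{m,\Psi,\Phi}$. Surjectivity of $T^*_\Phi$ gives $\ran(\mathcal{M}_mT^*_\Phi)=\mathcal{M}_m\ell^2(\N)=\ell^2(I)$, where the last equality again uses invertibility of $D$. Hence $\ran(M_{m,\Psi,\Phi})=T_\Psi(\ell^2(I))=\ran(T_{(\psi_n)_{n\in I}})$, which is closed by the frame sequence assumption. Note that the ranges lie in $\Hs_2$ and $\Hs_1$ respectively. I will read the statement's ``respectively'' accordingly, since the ambient spaces are determined by the synthesis sequences.

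For (ii), the transfer principle applied to $T_\Phi$ reduces closedness of $\ran(M_{m,\Phi,\Psi})$ to closedness of $\ran(\mathcal{M}_mT^*_\Psi)$. Because $\Psi$ is also a Riesz sequence, $T^*_\Psi$ is onto $\ell^2(\N)$, so this range equals $\ran(\mathcal{M}_m)$. It therefore remains to show that the diagonal operator $\mathcal{M}_m$ has closed range if and only if $(m_n)_{n\in I}$ is semi-normalized. Boundedness is given, so only the lower bound $\inf_{n\in I}|m_n|>0$ is at stake.

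If $\inf_{n\in I}|m_n|>0$, then $\ran(\mathcal{M}_m)=\ell^2(I)$, which is closed. Conversely, suppose the range is closed. Then $\mathcal{M}_m$ is bounded below on $\Ker(\mathcal{M}_m)^\perp=\ell^2(I)$, by the open mapping theorem or the standard closed-range characterization. Testing on the unit vectors $\delta_n$ with $n\in I$ gives $|m_n|=\|\mathcal{M}_m\delta_n\|\ge c>0$.

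I expect the main technical point to be the careful justification of the transfer principle and of the two Riesz-sequence facts, namely that $T_\Phi$ is bounded below and that $T^*_\Phi$ is surjective. The facts themselves are standard, and I will cite \cite{OleIntroductiontoFramesandRieszBases} for them. The closed-range criterion for diagonal operators is the other place needing care: $\mathcal{M}_m$ vanishes on $\ell^2(\N\setminus I)$, so the lower bound must be tested only on $\ell^2(I)$.
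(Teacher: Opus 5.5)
Your proof is correct. Part (i) is essentially the paper's argument. The paper's Cauchy-sequence argument for $\ran(\mult)$ is a hands-on version of your transfer principle: the synthesis operator of $(\phi_n)_{n\in I}$ is bounded below, and the analysis operator of the frame sequence $(\overline{m_n}\psi_n)_{n\in I}$ has closed range. The treatment of $M_{m,\Psi,\Phi}$ via surjectivity of $T^*_{(\phi_n)_{n\in I}}$ is identical. One small point: in (i) only $(\psi_n)_{n\in I}$ is assumed to be Bessel, so $T_\Psi$ and $T^*_\Psi$ for the full sequence need not exist. You should write the factorizations with the $I$-indexed sequences from the outset, as your argument already does in substance. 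You are also right that the statement's ``respectively'' has the spaces swapped; the paper's proof treats them as you do.

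Where you genuinely differ is the ``only if'' direction of (ii).

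\emph{The paper's route.} It argues by contradiction. Using the biorthogonal sequence of $(\psi_n)_{n\in I}$, it shows that $\ran(M|_{W_1})$ is dense in $W_2=\overline{\spann}\{\phi_n:n\in I\}$. It then imports two results of Stoeva--Balazs: the restricted multiplier is injective, and it is not invertible when the symbol is not semi-normalized. Hence its range cannot be all of $W_2$. Those cited results are stated for a single Hilbert space and must be transported to $\Hs_1\neq\Hs_2$, as the paper's footnote notes.

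\emph{Your route.} You use that $T_\Phi$ is bounded below and $T^*_\Psi$ maps onto $\ell^2(\N)$. This shows that $\ran(\mult)$ is closed if and only if $\ran(\mathcal{M}_m)$ is closed. You then settle the diagonal case directly, applying the bounded-inverse theorem on $\Ker(\mathcal{M}_m)^\perp=\ell^2(I)$ and testing on the unit vectors $\delta_n$.

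\emph{What each buys.} Your argument is self-contained and avoids both the external invertibility theorems and the issue of extending them. It also makes transparent that, under the Riesz hypotheses, $\ran(\mult)$ is an isomorphic copy of $\ran(\mathcal{M}_m)$. The paper's route ties the result to the known invertibility theory of Riesz-basis multipliers, and shows explicitly that a non-closed range is a proper dense subspace of $W_2$. Your reduction recovers that too, since $\ran(\mathcal{M}_m)$ is dense in $\ell^2(I)$ whenever $m_n\neq 0$ for all $n\in I$.
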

    \begin{proof}
        Let $M:=\mult$ and put  $W_1:=\overline\spann\{\psi_n:n\in I\}$ and $W_2:=\overline{\spann}\{\phi_n : n\in I\}$.  Since $Mh=\sum_{n=1}^\infty m_n\langle h,\psi_n\rangle\phi_n=\sum_{n\in I}m_n\langle h,\psi_n\rangle\phi_n$ for all $h\in\Hs_1$, it is immediate that $W_1^\perp\subseteq\Ker(M)$ and $\ran(M)\subseteq W_2.$ Further, being closed subspaces of $\Hs_1$ and $\Hs_2$, respectively, both $W_1$ and $W_2$ are Hilbert spaces. We consider the operator $M_1:=M|_{W_1}:W_1\to W_2$, where $\ran(M_1)=\ran(M)$. Thus throughout, we investigate the closedness of $\ran(M_1)$.  
        \begin{enumerate}[(i)]
            \item Suppose $(\psi_n)_{n\in I}$ is a frame sequence in $\Hs_1$  and $(m_n)_{n\in I}$  is semi-normalized. In this case, both $(m_n\psi_n)_{n\in I}$ and $(\overline m_n\psi_n)_{n\in I}$ are still frame sequences. We show first that $\ran(M_1)$ is closed. Observe that $M_1$ can also be written as $T_{(\phi_n)_{n\in I}} T^*_{(\overline m_n\psi_n)_{n\in I}}$. Consider a  Cauchy sequence \\$(T_{(\phi_n)_{n\in I}} T^*_{(\overline m_n\psi_n)_{n\in I}}h_\ell)_{\ell\in I}$ in $\ran(M_1)$.  Fix $\varepsilon>0$. Since $\Phi$ is a Riesz sequence in $\Hs_2$, we have that $T_{(\phi_n)_{n\in I}}:\ell^2(I)\to W_2$ is invertible. Along with $(T_{(\phi_n)_{n\in I}} T^*_{(\overline m_n\psi_n)_{n\in I}}h_\ell)_{\ell\in I}$ being Cauchy, there must exist $K>0$ and $N>0$ such that
                \[ K\|T^*_{(\overline m_n\psi_n)_{n\in I}}(h_k-h_\ell)\|\le\|T_{(\phi_n)_{n\in I}} T^*_{(\overline m_n\psi_n)_{n\in I}}(h_k-h_\ell)\|<\varepsilon  \]
            for any $k,\ell\in I$ with $k,\ell\ge N.$ Hence, $(T^*_{(\overline m_n\psi_n)_{n\in I}}h_\ell)_{\ell\in I}$ is a Cauchy sequence in $\ell^2(I)$, hence, convergent to some $c\in\ell^2(I).$ But $(\overline m_n\psi_n)_{n\in I}$ is a frame sequence and so $\ran(T^*_{(\overline m_n\psi_n)_{n\in I}})$ is closed in $\ell^2(I)$. Therefore, $c=T^*_{(\overline m_n\psi_n)_{n\in I}}h$ for some $h\in\Hs_1.$ This shows that $\ran(M_1)$ is closed in $\Hs_2$.
            
            Now, for the closedness of $\ran(M_{m,\Psi,\Phi})$, denote $M_2:=T_{(m_n\psi_n)_{n\in I}}T^*_{(\phi_n)_{n\in I}}$ and thus, $\ran(M_{m,\Psi,\Phi})=\ran(M_2).$ Finally, $\ran(M_2)$ is closed since  $\ran(T^*_{(\phi_n)_{n\in I}})=\ell^2(I)$ and $\ran(T_{(m_n\psi_n)_{n\in I}})$ is closed in $\Hs_1$ (from  $(m_n\psi_n)_{n\in I}$ being a frame sequence).

            \item Suppose $\Psi$ is a Riesz sequence in $\Hs_1$. The ``if'' part follows from (i). For the other direction, assume that $(m_n)_{n\in I}$ is not semi-normalized. We shall show that $\ran(M_1)$ is not closed, by showing that $\ran(M_1)$ is dense in $W_2$ but is not equal to $W_2.$ Let $g\in\ran(M_1)^\perp$. Then for any $h\in W_1$, we obtain
            \begin{equation}\label{prooffornonclosed}
                0=\langle g, M_1 h\rangle =\left\langle g,\sum_{n\in I} m_n\langle h,\psi_n\rangle \phi_n \right\rangle =\sum_{n\in I}\langle g,\phi_n\rangle\overline{m_n\langle h, \psi_n\rangle}.
            \end{equation}
             Let $(\tilde\psi_n)_{n\in I}$ be the unique biorthogonal sequence of $(\psi_n)_{n\in I}$. For each $k\in I$, consider $h:=\tilde\psi_k/m_k$ in \eqref{prooffornonclosed} and conclude that
            \[ \langle g,\phi_k\rangle=0. \]
            This forces $g\in W_2^\perp$, which implies that $\ran(M_1)$ is dense in $W_2$. Recall that $M_1$ is injective since $(m_n\psi_n)_{n\in I}$ is a complete Bessel sequence in $W_1$ \cite[Proposition 3.1]{StoevaBalazsRieszBasesMultipliers}\footnote{\cite[Proposition 3.1]{StoevaBalazsRieszBasesMultipliers} and \cite[Proposition 5.1]{StoevaBalazsInvertibilityofMultipliers} concern the case of multipliers whose domain and codomain are the same Hilbert space, but the results naturally extend to when the domain and the codomain are possibly different}, but $M_1$ is not invertible since $(\psi_n)_{n\in I}\subseteq W_1$ and $(\phi_n)_{n\in I}\subseteq W_2$ are Riesz bases (for the respective Hilbert spaces $W_1$ and $W_2$) and $(m_n)_{n\in I}$ is not semi-normalized \cite[Theorem 5.1]{StoevaBalazsInvertibilityofMultipliers}\footnotemark[\value{footnote}]. Therefore, $\ran(M_1)\neq W_2=\overline{\ran(M_1)}.$

        \end{enumerate}
    \end{proof}

    We illustrate Proposition \ref{whenisrangeclosedgivenoneRiesz} through the following examples.
    \begin{ex}\label{examplesofclosedandnonclosedrange}
            \begin{enumerate}[(1)]
                Throughout, $\Hs$ is an arbitrary separable Hilbert space.  
                \item Let $(e_n)_{n=1}^\infty $ be an orthonormal basis for $\Hs$. Consider $m=(0,1,1,1,\dots)$, $\Psi=(e_2,e_2,e_3,e_4,e_5,e_6,\dots)$, and $\Phi=(e_1,e_2,e_3,e_4,e_5,\dots)$. Then $M:=\mult=M_{m,\Psi,\Phi}$ with closed range $\ran(M)=\overline{\spann}\{e_n : n\ge 2\}$. 
                
                \item 
                \begin{enumerate}
                    \item Consider $m = (0,1,1,1,\dots)$ and $\Psi$, $\Phi$ to be Riesz sequences in $\Hs$.  Thus,  we have that $\ran(\mult)=\overline{\spann}\{\phi_n\,:\,n\ge2\}$ and $\ran(M_{m,\Psi,\Phi})=\overline{\spann}\{\psi_n\,:\,n\ge2\}$, which are closed subspaces of $\Hs$.
                    \item Consider $m=(0,1/2,1/3,1/4,\dots)$ and $\Psi,\Phi$ to be Riesz sequences in $\Hs$. Similar arguments as in Remark \ref{multiplierwithnonclosedrange} show that $\ran(\mult)$ and $\ran(M_{m,\Psi,\Phi})$ are not closed in $\Hs$.       
                \end{enumerate}

            \end{enumerate}
        \end{ex}

    If $\Psi$ and $\Phi$ are both overcomplete frames, then $\mult$ can either be invertible (e.g., take $m=(1)$ and $\Psi=\Phi$) or noninvertible. In the case of noninvertibility, the multiplier may  have closed range  or nonclosed range, as illustrated in {Example \ref{exofnoninvwithclosedandnonclosedrange}} below. To show the closedness of the range of the {constructed multiplier in Example \ref{exofnoninvwithclosedandnonclosedrange}(1)}, we first mention the following lemma {and provide a short proof,  as no proof appears to be available in the existing literature, to the best of our knowledge. }

    \begin{lemma}\label{HisWoplusWpiffWclosed}
        Let $\Hs$ be a separable Hilbert space and let $W$ be a subspace of $\Hs$. Then $\Hs=W\oplus W^\perp$ if and only if $W$ is closed.
    \end{lemma}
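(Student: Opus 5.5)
The plan is to prove the two directions separately. Here $\oplus$ is read as in the rest of the paper: $\Hs=W\oplus Z$ means every $h\in\Hs$ is uniquely $h=w+z$ with $w\in W$ and $z\in Z$. The condition $W\cap Z=\{0\}$ is automatic when $Z=W^\perp$, since $w\in W\cap W^\perp$ gives $\langle w,w\rangle=0$. So the content of the statement is really whether $W+W^\perp=\Hs$.

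For the ``if'' direction, I will assume $W$ is closed, so that $W$ is a Hilbert space in its own right. The standard projection theorem in Hilbert spaces then gives, for each $h\in\Hs$, a unique nearest point $w\in W$ with $h-w\in W^\perp$. Hence $h=w+(h-w)\in W+W^\perp$, and together with $W\cap W^\perp=\{0\}$ this yields $\Hs=W\oplus W^\perp$. Separability plays no role here; I will simply cite the projection theorem rather than reprove it.

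For the ``only if'' direction, which is the real content, I will assume $\Hs=W+W^\perp$ and show that $\overline{W}\subseteq W$. Take $x\in\overline{W}$ and write $x=w+z$ with $w\in W$ and $z\in W^\perp$. I then need two facts:
\begin{itemize}
\item $x-w=z$ lies in $\overline{W}$, because $\overline{W}$ is a subspace containing both $x$ and $w$;
\item $W^\perp=\overline{W}^\perp$, by continuity of the inner product.
\end{itemize}
It follows that $z\in\overline{W}\cap\overline{W}^\perp=\{0\}$, so $x=w\in W$ and $W$ is closed.

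I expect the only subtle point to be making explicit that $\oplus$ here does not presuppose closedness of $W$. This is why the argument will use $\overline{W}^\perp=W^\perp$ rather than any property of $W$ itself. With that clarified, each direction is a couple of lines.
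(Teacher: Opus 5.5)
Your proposal is correct and follows essentially the paper's argument: you decompose a point of $\overline{W}$ as $w+z$ with $w\in W$, $z\in W^\perp$, and show $z\in\overline{W}\cap W^\perp=\{0\}$. The only differences are that you work with $x\in\overline{W}$ directly rather than through a convergent sequence $(h_n)$, and that you state explicitly the fact $W^\perp=\overline{W}^\perp$, which the paper's final step (``As $\overline{W}$ is closed, $v$ must be $0$'') uses only implicitly.
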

    \begin{proof}
        The ``if'' part is well-known. Assume that $\Hs=W\oplus W^\perp$ and let $(h_n)_{n=1}^\infty $ be a sequence in $W$ that converges to some $h\in\Hs.$ Write $h=u+v$, where $u$ and $v$ are the unique vectors in $W$ and $W^\perp$, respectively. Then $(h_n-u)_{n=1}^\infty $ is a sequence in $W$, hence in $\overline{W}$, that converges to $v\in W^\perp$. As $\overline{W}$ is closed, $v$ must be 0, and thus $h=u\in W$, as desired.
    \end{proof}

    \begin{ex}\label{exofnoninvwithclosedandnonclosedrange}
        Consider a separable Hilbert space $\Hs$ and let $(e_n)_{n=1}^\infty $ be an orthonormal basis for $\Hs$.
        \begin{enumerate}[(1)]
            \item  If $m=(1),\Psi=(e_1,e_1,e_2,e_2,e_3,e_4,\dots)$, and $\Phi=(e_1,e_2,e_2,e_1,e_3,e_4,\dots),$ then the multiplier $\mult$ is noninvertible with closed range. Indeed, observe that  $\mult h=h+\langle h,e_1\rangle e_2+\langle h,e_2\rangle e_1$ for every $h\in\Hs$.  So, $\mult e_1=e_1+e_2=\mult e_2$  implies $\mult$ is not injective.  We also see that $e_1-e_2\in\ran(\mult)^\perp$. Moreover, for every $h\in\Hs$,
    \begin{align*}
        h &= \frac12\langle h,e_1+e_2\rangle (e_1+e_2)+\sum_{n\ge3}\langle h,e_n\rangle e_n+\frac12\langle h,e_1-e_2\rangle(e_1-e_2)
        \\&= \mult\left( \frac12\langle h,e_1+e_2\rangle e_1 +\sum_{n\ge3}\langle h,e_n\rangle e_n \right)+\frac12\langle h,e_1-e_2\rangle(e_1-e_2)
        \\&\in\ran(\mult)\oplus\ran(\mult)^\perp.
    \end{align*}
    This shows that $\Hs=\ran(\mult)\oplus\ran(\mult)^\perp$. Thus, $\mult$ has closed range by Lemma \ref{HisWoplusWpiffWclosed}. 
    
        \item  We construct a noninvertible multiplier with nonclosed range. Consider $m=(1),  \\\Psi=(e_1,e_1,e_2,e_2,e_3,e_3,e_4,e_4,\dots)$, and $\Phi=(e_1,e_2,e_2,e_3,e_3,e_4,e_4,\dots)$.
        Then for any $h\in\Hs$,
        \[ \mult h = h + \sum_{n=1}^\infty \langle h,e_n\rangle e_{n+1}. \]
        To show that $\ran(M)$ is dense in $\Hs$, first observe that $Me_n = e_n+e_{n+1}$ for all $n\in\N$ and therefore, $\spann\{e_n+e_{n+1}\}_{n=1}^\infty \subseteq\ran(\mult)$ implying $\overline{\spann}\{e_n+e_{n+1}\}_{n=1}^\infty \subseteq\overline\ran(\mult).$ But it was shown in \cite[Example 5.4.6]{OleIntroductiontoFramesandRieszBases} that $\overline{\spann}\{e_n+e_{n+1}\}_{n=1}^\infty =\Hs.$ Thus, $\overline\ran(\mult)=\Hs.$ But, clearly, $e_1\notin\ran(\mult)$ and therefore, $\ran(M)\neq\overline\ran(M).$
        \end{enumerate}
    
    \end{ex}

    The next result provides sufficient conditions to ensure the closedness of the range of a given frame multiplier.

    \begin{prop}\label{whendoesaframemultiplierhaveaclosedrange}
        Let $m$ be semi-normalized and let $\Psi$ and $\Phi$ be  frames {for $\Hs_1$ and $\Hs_2$, respectively}. Suppose that $\Ker(T_\Phi)\subseteq\Ker(T_{\overline m\Psi})$ or $\Ker(T_{m\Phi})\subseteq\Ker(T_\Psi)$. Then $\mult$ is injective and has a closed range in $\Hs_2$.
    \end{prop}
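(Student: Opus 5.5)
The plan is to write $M:=\mult=T_\Phi T^*_{\overline m\Psi}$ and treat the two kernel hypotheses separately, reducing the second to the first.

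\emph{Case $\Ker(T_\Phi)\subseteq\Ker(T_{\overline m\Psi})$.} Since $m$ is semi-normalized and $\Psi$ is a frame for $\Hs_1$, the sequence $\overline m\Psi$ is also a frame for $\Hs_1$. Its analysis operator $T^*_{\overline m\Psi}$ is therefore bounded below, and its range $\ran(T^*_{\overline m\Psi})=\Ker(T_{\overline m\Psi})^\perp$ is closed. The hypothesis gives
\[
\ran(T^*_{\overline m\Psi})=\Ker(T_{\overline m\Psi})^\perp\subseteq\Ker(T_\Phi)^\perp .
\]
Because $\Phi$ is a frame, $T_\Phi$ restricted to $\Ker(T_\Phi)^\perp$ is bounded below: for $c\in\Ker(T_\Phi)^\perp$ we have $c=T^*_\Phi S_\Phi^{-1}T_\Phi c$, hence $\|c\|\le\|T^*_\Phi S^{-1}_\Phi\|\,\|T_\Phi c\|$. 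Composing the two estimates yields
\[
\|Mh\|=\|T_\Phi T^*_{\overline m\Psi}h\|\ge C\|T^*_{\overline m\Psi}h\|\ge C\sqrt{A_{\overline m\Psi}}\,\|h\| .
\]
So $M$ is bounded below, which makes it injective with closed range. A Cauchy-sequence argument as in Proposition \ref{whenisrangeclosedgivenoneRiesz}(i) gives the same conclusion if one prefers to avoid the lower bound.

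\emph{Case $\Ker(T_{m\Phi})\subseteq\Ker(T_\Psi)$.} Here I would write $M=T_{\Phi}\mathcal M_m T^*_\Psi$ and absorb the symbol into the synthesis side, so that $M=T_{m\Phi}T^*_\Psi$, where $m\Phi$ is again a frame for $\Hs_2$. This is the first case with the roles of symbol placement swapped: $\Ker(T_{m\Phi})\subseteq\Ker(T_\Psi)$ gives $\ran(T^*_\Psi)\subseteq\Ker(T_{m\Phi})^\perp$. Then $T^*_\Psi$ is bounded below because $\Psi$ is a frame, and $T_{m\Phi}$ is bounded below on $\Ker(T_{m\Phi})^\perp$ by the same canonical-dual argument as above. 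Hence $M$ is bounded below again.

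The main obstacle is the step that $T_\Phi$ is bounded below on $\Ker(T_\Phi)^\perp$, that is, the frame-sequence version of the fact that $T_\Phi$ has closed range. I would justify it through the identity $\pi_{\Ker(T_\Phi)^\perp}=T^*_\Phi S^{-1}_\Phi T_\Phi$. The other ingredient needing care is that semi-normalization preserves the frame property, which gives lower bound $\alpha^2A$. Injectivity and closed range then both follow at once from $M$ being bounded below, since any Cauchy sequence $Mh_\ell$ forces $h_\ell$ to be Cauchy.
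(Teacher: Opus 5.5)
Your proposal is correct and follows essentially the same route as the paper. You bound $\|Mh\|$ from below by combining the lower frame bound of $T^*_{\overline m\Psi}$ with the fact that $T_\Phi$ is bounded below on $\Ker(T_\Phi)^\perp\supseteq\ran(T^*_{\overline m\Psi})$, and you handle the second hypothesis by rewriting $M=T_{m\Phi}T^*_\Psi$. Your justification via $T^*_\Phi S_\Phi^{-1}T_\Phi=\pi_{\Ker(T_\Phi)^\perp}$ is actually a bit more careful than the paper's: the constant $A_\Phi A_{\overline m\Psi}$ stated there should be $\sqrt{A_\Phi A_{\overline m\Psi}}$, which is the constant your argument produces.
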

    \begin{proof}
        Denote $M:=\mult$ and let $(Mh_n)_{n=1}^\infty $ be a Cauchy sequence in $\Hs_2$. Assume first that $\Ker(T_\Phi)\subseteq\Ker(T_{\overline m\Psi})$.  Using the frame $\overline m\Psi$  and using \cite[Lemma 5.5.5]{OleIntroductiontoFramesandRieszBases}, we have $\ran(T^*\und{\overline m\Psi})=\overline{\ran}(T^*\und{\overline m\Psi})=\Ker(T\und{\overline{m}\Psi})^\perp\subseteq\Ker(T_\Phi)^\perp$, and  for any $n,\ell\in\N$,
        \begin{align*}
            \|Mh_n-Mh_\ell\|=\|T_\Phi T^*\und{\overline m\Psi}(h_n-h_\ell)\|\ge A_\Phi A_{\overline m\Psi}\|h_n-h_\ell\|.          
        \end{align*}
        Therefore, $(h_n)_{n=1}^\infty $ converges in $\Hs_1$. Consequently, the sequence $(Mh_n)_{n=1}^\infty $   converges to some element in $\ran(M)$, and so $\ran(M)$ is closed. The injectivity of $M$  immediately follows from the inequality $\|Mh\|\ge A_\Phi A_{\overline m\Psi}\|h\|$, $h\in\Hs_1$, obtained from arguments as above. 
        
        Assume now that $\Ker(T_{m\Phi})\subseteq\Ker(T_\Psi)$. Writing $\mult=T\und{m\Phi}T^*_\Psi$ and using similar arguments as above, we get the desired conclusions.
    \end{proof}

    \begin{rem} The two conditions in Proposition \ref{whendoesaframemultiplierhaveaclosedrange} are not equivalent. Indeed, consider a separable Hilbert space $\Hs$ and an orthonormal basis $(e_n)_{n=1}^\infty $ for $\Hs$. Then the sequences $m=(1,1/2,1,1,1,\dots)$, $\Phi=(e_1,e_1,e_2,e_3,e_4,\dots)$, and $\Psi=(e_1,2e_1,e_2,e_3,e_4,\dots)$ satisfy $\Ker(T_\Phi)\subseteq\Ker(T_{\overline m\Psi})$, but not \\$\Ker(T_{m\Phi})\subseteq\Ker(T_\Psi)$.
    \end{rem}
        
    Proposition \ref{whendoesaframemultiplierhaveaclosedrange} only talks about the closedness of the range of the multiplier $\mult$.  It is then a natural question if the closedness of $\ran(\mult)$ is equivalent to the closedness of $\ran(M_{m,\Psi,\Phi})$. Note that in some cases, this is true, but not in general. Indeed, if $m$ is a sequence of real numbers and $\mult$ is invertible, then $\ran(\mult)$ and $\ran(\mult^*)=\ran(M_{m,\Psi,\Phi})$ are both closed. On the other hand, the example below lists sequences $m,\Psi, \Phi$ where $\ran(\mult)$ is closed, but $\ran(M_{m,\Psi,\Phi})$ is not closed.

    \begin{ex}
        Let $(e_n)_{n=1}^\infty $ be an orthonormal basis for $\Hs$. Consider 
        $
            m=(1,\tfrac{i}{2},\tfrac12,1,\tfrac i2,\tfrac12,1,\tfrac i2,\tfrac12,\dots), \Psi=(e_1,e_2,e_2,e_2,e_3,e_3,e_3,e_4,e_4,e_4,\dots),
             \Phi=(e_1,ie_1,e_1,e_2,ie_2,e_2,e_3,ie_3,e_3,\dots).$
        Then $\mult = I_\Hs$, hence with closed range. Further for every $h\in\Hs$, we have that $M_{m,\Psi,\Phi}h=h+\sum_{n=1}^\infty \langle h,e_n\rangle e_{n+1}$, which from Example \ref{exofnoninvwithclosedandnonclosedrange}(2) shows that $M_{m,\Psi,\Phi}$  has nonclosed range.
    \end{ex}    

    \section*{Acknowledgements}

    This research was funded in whole by the Austrian Science Fund (FWF) [grant DOI 10.55776/P35846]. For open access purposes, the authors have applied a CC BY public copyright license to any author-accepted manuscript version arising from this submission.

	\bibliographystyle{plain}
	\bibliography{OblPsInvofMultbib}

@article{MilneOMP1968,
	author = {Milne, R. D.},
	title = "{An oblique matrix pseudoinverse}",
	journal = {SIAM Journal on Applied Mathematics},
	volume = {16},
	number = {5},
	pages = {931-944},
	year = {1968},
	doi = {10.1137/0116075},
	
	URL = { 
		
		https://doi.org/10.1137/0116075
		
		
		
	},
	eprint = { 
		
		https://doi.org/10.1137/0116075
		
		
		
	}
}

@article{EldarTobiasGeneralFrameworkforConsistentSamplinginHilbertSpaces,
author = {Eldar, Y.C. and Werther, T.},
year = {2005},
pages = {},
title = "{General framework for consistent sampling in Hilbert spaces}",
volume = {03},
number = {04},
journal = {International Journal of Wavelets, Multiresolution and Information Processing},
doi = {10.1142/S0219691305000981}
}

@book{KatoPerturbationTheoryforLinearOperators,
      author        = "Kato, T.",
      title         = "{Perturbation theory for linear operators; 2nd ed.}",
      publisher     = "Springer",
      address       = "Berlin",
      series        = "Grundlehren der mathematischen Wissenschaften : a series
                       of comprehensive studies in mathematics",
      year          = "1976",
      url           = "https://cds.cern.ch/record/101545",
}

@book{OleIntroductiontoFramesandRieszBases,
series = "{Applied and Numerical Harmonic Analysis}",
publisher = {Birkhäuser},
isbn = {0817642951},
year = {2003},
title = "{An Introduction to Frames and Riesz Bases}",
language = {eng},
address = {Boston, Mass.},
author = {Christensen, O.},
}

@article{BalazsBasicDefinitionandPropertiesofBesselMultipliers,
title = "{Basic definition and properties of Bessel multipliers}",
journal = {Journal of Mathematical Analysis and Applications},
volume = {325},
number = {1},
pages = {571-585},
year = {2007},
issn = {0022-247X},
doi = {https://doi.org/10.1016/j.jmaa.2006.02.012},
url = {https://www.sciencedirect.com/science/article/pii/S0022247X06001260},
author = {P. Balazs}
}

@article{HeinekenMorillasObliqueDualFusionFrames,
author = {S.B. Heineken and P.M. Morillas},
title = "{Oblique dual fusion frames}",
journal = {Numerical Functional Analysis and Optimization},
volume = {39},
number = {7},
pages = {800--824},
year = {2018},
publisher = {Taylor \& Francis},
doi = {10.1080/01630563.2017.1421555},


URL = { 
    
        https://doi.org/10.1080/01630563.2017.1421555
    
    

},
eprint = { 
    
        https://doi.org/10.1080/01630563.2017.1421555
    
    

}

}

@article{ChristensenEldarObliqueDualFramesandShiftInvariantSpaces,
title = {Oblique dual frames and shift-invariant spaces},
journal = {Applied and Computational Harmonic Analysis},
volume = {17},
number = {1},
pages = {48-68},
year = {2004},
note = {Special Issue: Frames in Harmonic Analysis, Part 1},
issn = {1063-5203},
doi = {https://doi.org/10.1016/j.acha.2003.12.003},
url = {https://www.sciencedirect.com/science/article/pii/S1063520304000338},
author = {O. Christensen and Y.C Eldar}
}

@article{RobinsonOntheGeneralizedInverseofanArbitraryLinearTransformation,
 ISSN = {00029890, 19300972},
 URL = {http://www.jstor.org/stable/2312140},
 author = {D.W. Robinson},
 journal = {The American Mathematical Monthly},
 number = {5},
 pages = {412--416},
 publisher = {[Taylor & Francis, Ltd., Mathematical Association of America]},
 title = "{On the generalized inverse of an arbitrary linear transformation}",
 urldate = {2025-07-16},
 volume = {69},
 year = {1962},
 note = {In: Classroom notes, pages 406-425},
}

@article{LangenhopOnGeneralizedInversesofMatrices,
 ISSN = {00361399},
 URL = {http://www.jstor.org/stable/2099162},
 author = {C.E. Langenhop},
 journal = {SIAM Journal on Applied Mathematics},
 number = {5},
 pages = {1239--1246},
 publisher = {Society for Industrial and Applied Mathematics},
 title = "{On generalized inverses of matrices}",
 urldate = {2025-07-16},
 volume = {15},
 year = {1967}
}

@article{ChipmanOnLeastSquareswithInsufficientObservations,
author = {J.S. Chipman},
title = "{On least squares with insufficient observations}",
journal = {Journal of the American Statistical Association},
volume = {59},
number = {308},
pages = {1078--1111},
year = {1964},
publisher = {ASA Website},
doi = {10.1080/01621459.1964.10480751},


URL = { 
    
    
        https://www.tandfonline.com/doi/abs/10.1080/01621459.1964.10480751
    

},
eprint = { 
    
    
        https://www.tandfonline.com/doi/pdf/10.1080/01621459.1964.10480751
    

}

}

@article{WardetalANoteontheObliqueMatrixPseudoinverse,
 ISSN = {00361399},
 URL = {http://www.jstor.org/stable/2099917},
 author = {J.F. Ward and T.L. Boullion and T.O. Lewis},
 journal = {SIAM Journal on Applied Mathematics},
 number = {2},
 pages = {173--175},
 publisher = {Society for Industrial and Applied Mathematics},
 title = "{A note on the oblique matrix pseudoinverse}",
 urldate = {2025-07-17},
 volume = {20},
 year = {1971}
}

@phdthesis{EldarQuantumSignalProcessing,
  title        = {Quantum Signal Processing},
  author       = {Y.C. Eldar},
  year         = {2001},
  school       = {Massachusetts Institute of Technology},
  type         = {PhD thesis}
}

@article{StoevaBalazsInvertibilityofMultipliers,
	title = "{Invertibility of multipliers}",
	journal = {Applied and Computational Harmonic Analysis},
	volume = {33},
	number = {2},
	pages = {292-299},
	year = {2012},
	issn = {1063-5203},
	doi = {https://doi.org/10.1016/j.acha.2011.11.001},
	url = {https://www.sciencedirect.com/science/article/pii/S1063520311001102},
	author = {D.T. Stoeva and P. Balazs}
}

@article{StoevaBalazsDetailedCharacterizationofConditionsfortheUnconditionalConvergenceandInvertibilityofMultipliers, title="{Detailed characterization of conditions for the unconditional convergence and invertibility of multipliers}", volume={12}, url={http://dx.doi.org/10.1007/bf03549563}, DOI={10.1007/bf03549563}, number={2–3}, journal={Sampling Theory in Signal and Image Processing}, publisher={Springer Science and Business Media LLC}, author={Stoeva, D.T. and Balazs, P.}, year={2013},  pages={87–125}, language={en} }

@article{StoevaBalazsASurveyontheUnconditionalConvergenceandtheInvertibilityofFrameMultiplierswithImplementation, 
title="{A survey on the unconditional convergence and the invertibility of frame multipliers with implementation}", url={http://dx.doi.org/10.1007/978-3-030-36291-1_6}, DOI={10.1007/978-3-030-36291-1_6}, journal={Applied and Numerical Harmonic Analysis}, publisher={Springer International Publishing}, author={Stoeva, D.T. and Balazs, P.}, year={2020}, pages={169–192}, language={en} }

@article{BalazsStoevaRepresentationofhtInverseofaFrameMultiplier,
title = "{Representation of the inverse of a frame multiplier}",
journal = {Journal of Mathematical Analysis and Applications},
volume = {422},
number = {2},
pages = {981-994},
year = {2015},
issn = {0022-247X},
doi = {https://doi.org/10.1016/j.jmaa.2014.09.020},
url = {https://www.sciencedirect.com/science/article/pii/S0022247X14008440},
author = {P. Balazs and D.T. Stoeva}
}

@article{StoevaBalazsOntheDualFrameInducedbyanInvertibleFrameMultiplier, title="{On the dual frame induced by an invertible frame multiplier}", volume={15}, url={http://dx.doi.org/10.1007/bf03549600}, DOI={10.1007/bf03549600}, number={1}, journal={Sampling Theory in Signal and Image Processing}, publisher={Springer Science and Business Media LLC}, author={Stoeva, D.T. and Balazs, P.}, year={2016},  pages={119–130}, language={en} }

@article{EldarSamplingwithArbitrarySamplingandReconstructionSpacesandObliqueDualFrameVectors, title="{Sampling with arbitrary sampling and reconstruction spaces and oblique dual frame vectors}", volume={9}, url={http://dx.doi.org/10.1007/s00041-003-0004-2}, DOI={10.1007/s00041-003-0004-2}, number={1}, journal={Journal of Fourier Analysis and Applications}, publisher={Springer Science and Business Media LLC}, author={Eldar, Y.C.}, year={2003},  pages={77–96} }

@article{DiazHeinekenMorillasApproximateobliqueDualFrames,
title = "{Approximate oblique dual frames}",
journal = {Applied Mathematics and Computation},
volume = {452},
pages = {128015},
year = {2023},
issn = {0096-3003},
doi = {https://doi.org/10.1016/j.amc.2023.128015},
url = {https://www.sciencedirect.com/science/article/pii/S0096300323001844},
author = {J.P. Díaz and S.B. Heineken and P.M. Morillas}
}

@article{DiazHeinekenMorillasObliqueDulalityforFusionFrames, 
title="{Approximate oblique duality for fusion frames}", url={https://arxiv.org/abs/2401.16539}, DOI={10.48550/ARXIV.2401.16539}, abstractNote={Fusion frames are a convenient tool in applications where we deal with a large amount of data or when a combination of local data is needed. Oblique dual fusion frames are suitable in situations where the analysis for the data and its subsequent synthesis have to be implemented in different subspaces of a Hilbert space. These procedures of analysis and synthesis are in general not exact, and also there are circumstances where the exact dual is not available or it is necessary to improve its properties. To resolve these questions we introduce the concept of approximate oblique dual fusion frame, and in particular of approximate oblique dual fusion frame system. We study their properties. We give the relation to approximate oblique dual frames. We provide methods for obtaining them. We show how to construct other duals from a given one that give reconstructions errors as small as we want.}, 
journal={arXiv},publisher={arXiv}, author={Díaz, J.P. and Heineken, S.B. and Morillas, P.M.}, year={2024} }

@conference{AceskaMalonzoVelascoODFcompletion,
  title        = "{Oblique dual frame completion}",
  author       = {R. Aceska and J.V. Malonzo and G.A.M. Velasco},
  year         = {2025},
  booktitle    = {Proceedings of the International Conference on Sampling Theory and Applications},
}

@article{XiaoZhuZengODFinFinDimHS,
author = {Xiao, X.C. and Zhu, Y.C. and Zeng, X.M.},
title = "{Oblique dual frames in finite-dimensional Hilbert spaces}",
journal = {International Journal of Wavelets, Multiresolution and Information Processing},
volume = {11},
number = {02},
pages = {1350011},
year = {2013},
doi = {10.1142/S0219691313500112},

URL = { 
    
        https://doi.org/10.1142/S0219691313500112
    
    

},
eprint = { 
    
        https://doi.org/10.1142/S0219691313500112
    
    

}
}

@article{KooLimExtensionofBesselSeqtoODFSeqeunces, title="{Extension of Bessel sequences to oblique dual frame sequences and the minimal projection}", volume={30}, url={http://dx.doi.org/10.13001/1081-3810.2902}, DOI={10.13001/1081-3810.2902}, abstractNote={<jats:p>An extension of two Bessel sequences to oblique dual frame sequences and its applications to shift-invariant spaces are considered. The best-known situation where this kind of extension is necessary is the construction of a pair of biorthogonal multiresolution analyses, where two generating sets whose shifts are only assumed to be Bessel sequences are given. This extension naturally leads to consideration of the âminimal projectionâ extending two closed subspaces. The existence or non-existence of the minimal projection is discussed.</jats:p>}, journal={The Electronic Journal of Linear Algebra}, publisher={University of Wyoming Libraries}, author={Koo, Y. and Lim, J.}, year={2015},  }

@article{CasazzaTheArtofFrameTheory, title="{The art of frame theory}", volume={4}, url={http://dx.doi.org/10.11650/twjm/1500407227}, DOI={10.11650/twjm/1500407227}, number={2}, journal={Taiwanese Journal of Mathematics}, publisher={The Mathematical Society of the Republic of China}, author={Casazza, P.G.}, year={2000} }

@article{DuffinSchaefferAclassofnonharmonicFourierSeries, title="{A class of nonharmonic Fourier series}", volume={72}, url={http://dx.doi.org/10.1090/S0002-9947-1952-0047179-6}, DOI={10.1090/s0002-9947-1952-0047179-6}, number={2}, journal={Transactions of the American Mathematical Society}, publisher={American Mathematical Society (AMS)}, author={Duffin, R.J. and Schaeffer, A.C.}, year={1952}, pages={341–366}, language={en} }

@article{Daubechies_Grossmann_Meyer_1986, title="{Painless nonorthogonal expansions}", volume={27}, url={http://dx.doi.org/10.1063/1.527388}, DOI={10.1063/1.527388},number={5}, journal={Journal of Mathematical Physics}, publisher={AIP Publishing}, author={Daubechies, I. and Grossmann, A. and Meyer, Y.}, year={1986},  pages={1271–1283}, language={en} }

@book{CasazzaKutyniokFiniteFrames, title="{Finite Frames:Theory and Applications}",
subtitle={Theory and Applications}, 
author={P. Casazza and G. Kutyniok},
url={http://dx.doi.org/10.1007/978-0-8176-8373-3}, DOI={10.1007/978-0-8176-8373-3}, journal={Applied and Numerical Harmonic Analysis}, publisher={Birkhäuser Boston}, year={2013}, language={en} }

@inbook{MatzHlawatschLinearTimeFreqFiltersOnLineAlgoandApps,
author = {Matz, G. and Hlawatsch, F.},
year = {2002},
pages = {205-272},
title = "{Linear time-frequency filters: on-line algorithms and applications}",
journal = {Applications in Time-Frequency Signal Processing},
publisher = {CRC Press}
}

@article{GoyalEtAlQuantizedFrameExpansionswithErasures,
title = "{Quantized frame expansions with erasures}",
journal = {Applied and Computational Harmonic Analysis},
volume = {10},
number = {3},
pages = {203-233},
year = {2001},
issn = {1063-5203},
doi = {https://doi.org/10.1006/acha.2000.0340},
url = {https://www.sciencedirect.com/science/article/pii/S1063520300903403},
author = {V.K. Goyal and J. Kovačević and J.A. Kelner}
}

@book{FoucartRauhutAMathematicalIntroductiontoCompressiveSensing, title="{A Mathematical Introduction to Compressive Sensing}", url={http://dx.doi.org/10.1007/978-0-8176-4948-7}, DOI={10.1007/978-0-8176-4948-7}, journal={Applied and Numerical Harmonic Analysis}, publisher={Springer New York}, author={Foucart, S. and Rauhut, H.}, year={2013}, language={en} }

@book{BenedettoFerreiraModernSamplingTheory, editor = {J.J. Benedetto and P.J.S.G. Ferreira},title={Modern Sampling Theory: Mathematics and Applications}, subtitle = {Mathematics and Applications}, url={http://dx.doi.org/10.1007/978-1-4612-0143-4}, DOI={10.1007/978-1-4612-0143-4}, publisher={Birkhäuser Boston}, year={2001}, language={en} }

@article{StoevaTauboeck,
    author = {D.T. Stoeva and G. Taub\"ock},
    title = "{On pseudo-inverses of frame multipliers}",
    journal = {Preprint},
    year = {2018}    
}

@ARTICLE{BehrensScharfSignalProcessingApplicationsofObliqueProjectionOperators,
  author={Behrens, R.T. and Scharf, L.L.},
  journal={IEEE Transactions on Signal Processing}, 
  title="{Signal processing applications of oblique projection operators}", 
  year={1994},
  volume={42},
  number={6},
  pages={1413-1424},
  doi={10.1109/78.286957}}

@article{GrevilleSolutionoftheMatrixEqnandRelationsbetweenObliqueandorthogonalProjectors,
author = {Greville, T.N.E.},
title = "{Solutions of the matrix equation XAX = X, and relations between oblique and orthogonal projectors}",
journal = {SIAM Journal on Applied Mathematics},
volume = {26},
number = {4},
pages = {828-832},
year = {1974},
doi = {10.1137/0126074},

URL = { 
    
        https://doi.org/10.1137/0126074
    
    

},
eprint = { 
    
        https://doi.org/10.1137/0126074
    
    

}
}

@Inbook{FeichtingerNowakAFirstSurveyofGaborMultipliers,
author="Feichtinger, H.G.
and Nowak, K.",
editor="Feichtinger, H.G.
and Strohmer, T.",
title="A first survey of Gabor multipliers",
bookTitle="Advances in Gabor Analysis",
year="2003",
publisher="Birkh{\"a}user Boston",
address="Boston, MA",
pages="99--128",
isbn="978-1-4612-0133-5",
doi="10.1007/978-1-4612-0133-5_5",
url="https://doi.org/10.1007/978-1-4612-0133-5_5"
}

@book{HeilABasisTheoryPrimer, title={A Basis Theory Primer}, url={http://dx.doi.org/10.1007/978-0-8176-4687-5}, DOI={10.1007/978-0-8176-4687-5}, journal={Applied and Numerical Harmonic Analysis}, publisher={Birkhäuser Boston}, author={Heil, C.}, year={2011}, language={en} }

@article{CorachMaestripieriWeightedGeneralizedInversesObliqueProjectionsandLeast-SquaresProblems,
author = {G. Corach and A. Maestripieri},
title = {Weighted Generalized Inverses, Oblique Projections, and Least-Squares Problems},
journal = {Numerical Functional Analysis and Optimization},
volume = {26},
number = {6},
pages = {659--673},
year = {2005},
publisher = {Taylor \& Francis},
doi = {10.1080/01630560500323083},


URL = { 
    
        https://doi.org/10.1080/01630560500323083
    
    

},
eprint = { 
    
        https://doi.org/10.1080/01630560500323083
    
    

}

}

@incollection{NashedVotrubaAUnifiedOperatorTheoryofGeneralizedInverses,
title = {A Unified Operator Theory of Generalized Inverses},
editor = {M. Zuhair Nashed},
booktitle = {Generalized Inverses and Applications},
publisher = {Academic Press},
pages = {1-109},
year = {1976},
isbn = {978-0-12-514250-2},
doi = {https://doi.org/10.1016/B978-0-12-514250-2.50005-6},
url = {https://www.sciencedirect.com/science/article/pii/B9780125142502500056},
author = {M. Zuhair Nashed and G.F. Votruba}
}

@article{TangObliqueProjectionsBiorthogonalRieszBasesAndMultiwaveletsInHilbertSpaces,
 ISSN = {00029939, 10886826},
 URL = {http://www.jstor.org/stable/119911},
 author = {W.S. Tang},
 journal = {Proceedings of the American Mathematical Society},
 number = {2},
 pages = {463--473},
 publisher = {American Mathematical Society},
 title = {Oblique Projections, Biorthogonal {R}iesz Bases and Multiwavelets in {H}ilbert Spaces},
 urldate = {2026-02-16},
 volume = {128},
 year = {2000}
}

@InProceedings{StoevaBalazsRieszBasesMultipliers,
author="Stoeva, D. T.
and Balazs, P.",
editor="Cepedello Boiso, Manuel
and Hedenmalm, H{\aa}kan
and Kaashoek, Marinus A.
and Montes Rodr{\'i}guez, Alfonso
and Treil, Sergei",
title="Riesz bases multipliers",
booktitle="Concrete Operators, Spectral Theory, Operators in Harmonic Analysis and Approximation",
year="2014",
publisher="Springer Basel",
address="Basel",
pages="475--482",
isbn="978-3-0348-0648-0"
}

@article{BalanEquivalenceRelationsandDistancesbetweenHilbertframes,
 ISSN = {00029939, 10886826},
 URL = {http://www.jstor.org/stable/119271},
 author = {R. Balan},
 journal = {Proceedings of the American Mathematical Society},
 number = {8},
 pages = {2353--2366},
 publisher = {American Mathematical Society},
 title = {Equivalence Relations and Distances between {H}ilbert Frames},
 urldate = {2026-04-23},
 volume = {127},
 year = {1999}
}

@incollection{BariBiorthogonalSystemsandBasesinHilbertspaces,
    author = {Bari, N.K.},
    title = {Biorthogonal systems and bases in {H}ilbert space},
    booktitle = {Mathematics.~Vol.~IV Uch. Zap. Mosk. Gos. Univ.},
    publisher = {Moscow University Press}, 
    series ={issue 148},
    year = {1951},
    pages = {69-107}
}

@article{BariFrenchPaper,
  title = {Sur les bases dans l'espace de {H}ilbert},
  author = {Bari, N.K.},
  journal = {(Dokl.) Acad Sci. URSS},
  volume = {54},
  pages = {379--382},
  year = {1946},
  language = {fr}
}
	
\end{document}